\newcommand{\RR}{\mathbb{R}}
\newcommand{\grad}{{\bm{\nabla}}}
\renewcommand{\div}{\mathrm{div}}
\newcommand{\Lap}{\mathrm{\Delta}}
\newcommand{\Drm}{\mathrm{D}}
\newcommand{\T}{{\mathrm{T}}}
\newcommand{\Ocal}{\mathcal{O}}
\newcommand{\Abold}{\mathbf{A}}
\newcommand{\Bbold}{\mathbf{B}}
\newcommand{\Cbold}{\mathbf{C}}
\newcommand{\Dbold}{\mathbf{D}}
\newcommand{\Ibold}{\mathbf{I}}
\newcommand{\Jbold}{\mathbf{J}}
\newcommand{\Mbold}{\mathbf{M}}
\newcommand{\Nbold}{\mathbf{N}}
\newcommand{\Pbold}{\mathbf{P}}
\newcommand{\Sbold}{\mathbf{S}}
\newcommand{\Tbold}{\mathbf{T}}
\newcommand{\Wbold}{\mathbf{W}}
\newcommand{\dbold}{{\bm{d}}}
\newcommand{\ebold}{{\bm{e}}}
\newcommand{\fbold}{{\bm{f}}}
\newcommand{\nbold}{{\bm{n}}}
\newcommand{\ubold}{{\bm{u}}}
\newcommand{\xbold}{{\bm{x}}}
\newcommand{\zbold}{{\bm{z}}}
\newcommand{\lambdabold}{\bm{\lambda}}
\newcommand{\thetabold}{\bm{\theta}}
\newcommand{\mubold}{\bm{\mu}}
\newcommand{\blank}{\,{\cdot}\,}
\newcommand{\diag}{\mathrm{diag}}
\newcommand*\diff{\mathop{}\!\mathrm{d}}
\algnewcommand\Input{\textbf{Input: }}
\algnewcommand\Package{\textbf{Package: }}
\algnewcommand\Parameters{\textbf{Parameters: }}
\algnewcommand\Output{\textbf{Output: }}
\let\csname equation*\endcsname\relax
\let\csname endequation*\endcsname\relax
\theoremstyle{plain}
\newtheorem{theorem}{Theorem}[section]
\newtheorem{corollary}[theorem]{Corollary}
\newtheorem{lemma}[theorem]{Lemma}
\newtheorem{proposition}[theorem]{Proposition}
\theoremstyle{definition}
\newtheorem{definition}[theorem]{Definition}
\theoremstyle{remark}
\newtheorem{remark}[theorem]{Remark}
\begin{document}

\title[Anisotropic osmosis filtering for shadow removal in images]{Anisotropic osmosis filtering for shadow removal \\ in images}

\author{Simone Parisotto$^1$, Luca Calatroni$^2$, Marco Caliari$^3$, Carola-Bibiane Sch\"onlieb$^4$ and Joachim Weickert$^5$}

\address{$^1$Cambridge Centre for Analysis, Wilberforce Road, Cambridge, CB3 0WA UK}
\address{$^2$CMAP, \'Ecole Polytechnique, 91128 Palaiseau Cedex, France}
\address{$^3$University of Verona, Strada Le Grazie 15, 37134, Verona, Italy}
\address{$^4$University of Cambridge, Wilberforce Road, Cambridge CB3 0WA, UK}
\address{$^5$Saarland University, Campus E1.7, 66041 Saarbr\"ucken, Germany}
\ead{sp751@cam.ac.uk, luca.calatroni@polytechnique.edu, marco.caliari@univr.it, cbs31@cam.ac.uk, weickert@mia.uni-saarland.de}
\vspace{10pt}
\begin{indented}
\item[]17 September 2018
\end{indented}






\begin{abstract}
We present an anisotropic extension of the isotropic osmosis model that has been introduced by Weickert et 
al.~\cite{weickert} for  visual computing applications, and we adapt it specifically to shadow removal
applications.
We show that in the integrable setting, linear anisotropic osmosis minimises an energy that involves a 
suitable quadratic form which models local directional structures. In our shadow removal applications we
estimate the local structure via a modified tensor voting approach \cite{MorBurWeiGarPui12} and use
this information within an anisotropic diffusion inpainting that resembles edge-enhancing anisotropic
diffusion inpainting \cite{WW06,GWWB08}.
Our numerical scheme combines the nonnegativity preserving stencil of Fehrenbach and Mirebeau
\cite{Mirebeau} with an exact time stepping based on highly accurate polynomial approximations of
the matrix exponential.
The resulting anisotropic model is tested on several synthetic and natural images corrupted by constant 
shadows. We show that it outperforms isotropic osmosis, since it does not suffer from blurring artefacts
at the shadow boundaries.

\end{abstract}

\section{Introduction}

The use of partial differential equations (PDEs) has a long tradition in mathematical image processing. In particular, PDEs based on transport and diffusion mechanisms have been considered to model several image reconstruction models suitable for image enhancement, denoising, deblurring, inpainting and segmentation. We refer the reader to the review \cite{Guichard2001} and the monographs \cite{aubert2006mathematical,ChanShen,Sa01,schoenlieb,weickert98} 
for further references.

\subsection{Anisotropic diffusion} 
Among these models, a very special place is occupied by diffusive PDEs encoding \emph{anisotropy}, i.e.\ favouring diffusion along some specific directions only. In \cite{weickert98} nonlinear diffusion PDEs with space-variant diffusion tensors are studied. 
For a regular image domain $\Omega\subset\RR^2$ and a stopping time $T>0$, given a degraded image $f\in L^\infty(\Omega;\RR)$ and two smoothing parameters $\rho,\sigma >0$, the anisotropic diffusion model in \cite{weickert98} looks for a solution $u$ in a suitable function space satisfying the following initial value problem:
\begin{equation} \label{eq:anis_diff}
\begin{dcases}
u_t = \div\Big( \Dbold(\Jbold_\rho(\grad u_\sigma))\grad u \Big) &\text{on } \Omega\times (0,T], \\
u(\xbold,0)=f(\xbold) & \text{on } \Omega,\\
\langle  \Dbold(\Jbold_{\rho}(\grad u_\sigma))\grad u), \nbold \rangle & \text{on }   \partial\Omega\times(0,T],
\end{dcases}
\end{equation}
where $\nbold$ is the outward normal unitary vector on $\partial\Omega$ and $\Dbold$ is a non-constant diffusion tensor satisfying suitable regularity conditions, with eigenvectors inherited from the so-called \emph{structure tensor} 
$\Jbold_\rho(\grad u_\sigma)$. 
It encodes local directional information of $u_\sigma$ (that is, the image $u$ convolved with a Gaussian kernel of standard deviation $\sigma$).  More precisely,
 its eigenvectors point in the directions of largest and smallest contrast averaged over a
 Gaussian smoothing scale $\rho$, and the corresponding eigenvalues measure this contrast.
 The diffusion tensor uses the same eigenvectors, and its eigenvalues are functions of the
 eigenvalues of the structure tensor. Depending on the application, different models such
 as edge-enhancing anisotropic diffusion or coherence-enhancing anisotropic diffusion
 have been proposed \cite{weickert98}
Edge-enhancing anisotropic diffusion has been adapted to inpainting problems in
\cite{WW06}. It is particularly useful for sparse inpainting problems encoutered e.g.\ 
in inpainting-based compression applications where it outperforms other PDE approaches
\cite{GWWB08,SPME14}.


Beyond anisotropic PDEs, non-smooth anisotropic regularisers for variational imaging models are also considered. 
In \cite{GL10,KonDon17,DongDTGV2017,ParMasSch18analysis,ParMasSch18applied}, for instance, directionality is used to define the anisotropic Total-Variation and Total-Generalised-Variation functionals.
This is classically done by considering a re-parametrised version of the gradient operator depending on the local orientation of the image, which allows to enforce diffusion along certain directions only. In terms of variational models, one replaces the squared norm $\grad^\top u \grad u$ by a quadratic form of type $\grad^\top u \Dbold  \grad u$. This has a very long tradition in image analysis \cite{NagelEnkelmann1986}.

The explicit dependence of these anisotropic models on local terms such as position and local directions of the image makes the analysis of these models more challenging \cite{weickert98,GL10,DongDTGV2017,ParMasSch18analysis}. Also from a numerical point of view, the design of suitable schemes enforcing anisotropy is a non-trivial task since it requires the use of appropriate stencils that perfectly adapt to the local image structure.
Many methods have been proposed; see e.g.\  the unifying framework in \cite{weickert2013b} and the references therein. While most stencils lead to $L^2$-stable schemes, only a few
of them allow to preserve nonnegativity and $L^\infty$ stabilty \cite{weickert98,MN01a,Mirebeau}. A rather
sophisticated representative among them is the stencil of Fehrenbach and Mirebeau \cite{Mirebeau} which relies on lattice basis reduction ideas.

\subsection{Osmosis filtering}  

In this work we consider a transport-diffusion PDE describing the physical phenomenon of \emph{osmosis} for imaging applications. Compared to standard plain diffusion models, the model considered therein considers an additional drift term, making the process not symmetric (see \cite{Hagenburg2012} for the physical interpretation).
For a regular domain $\Omega\subset\mathbb R^2$, a given vector field $\dbold: \Omega \to \RR^2$ and a given image $f\in L^\infty(\Omega;\RR)$, the isotropic osmosis model reads \cite{weickert}
\begin{equation}
\begin{dcases}
u_t = \Lap u - \div( \dbold u) &\text{on } \Omega\times(0,T],\\
u(\xbold,0) = f(\xbold) &\text{on } \Omega,\\
\langle\, \grad u - \dbold u, \nbold \,\rangle =0 &\text{on } \partial\Omega \times (0,T].
\end{dcases}
\label{eq: equation weickert}
\end{equation}
Differently from plain diffusion models, osmosis steady states are non-constant. In particular, if $\dbold$ is defined in terms of a given image $v>0$ as $\dbold:=\grad\log v$, convergence to a rescaled version of $v$ can be proven \cite{weickert}. This is called the integrable or
compatible case.
In \cite{weickert,vogel} several imaging applications based on \eqref{eq: equation weickert} or a slight modification thereof are studied. 
One of them is the shadow removal problem, see \cite[Section 4.2]{weickert}, as considered in this paper.

\paragraph{Shadow removal.} The problem of shadow removal from a given image $f:\Omega\to\RR_+$ consists in removing the shadow appearing in $f$ while preserving the image geometry and texture underneath. We will assume in the following \emph{constant} shadows, i.e.\ where image intensity values inside and outside the shadow region are in relation with each other up to an (unknown) multiplicative constant. 

This problem is of great interest in computer vision as it often represents a pre-processing step in several segmentation, tracking and face recognition tasks where shadows are removed to avoid false detections/artefacts in the subsequent image processing. 
We refer the reader to \cite{ShadowReview2013} for a review on the existing models for shadow removal in images. 


For our purposes, a mathematical formulation of the shadow removal problem can be obtained decomposing the image domain $\Omega$ as
\begin{equation}  \label{eq:decomposition Omega}
\Omega = \Omega_\text{out} \cup \Omega_\mathrm{sb} \cup \Omega_\text{in},
\end{equation}
where $\Omega_\text{out}, \Omega_\text{sb}$ and $\Omega_\text{in}$ are the unshadowed region, the shadow boundaries and the shadowed region of the image, respectively, see Figure \ref{fig:decomposition_omega} for an example.

\begin{figure}[tbhp]
\centering
\begin{subfigure}[t]{0.235\textwidth}\centering
\includegraphics[width=1\textwidth]{\detokenize{./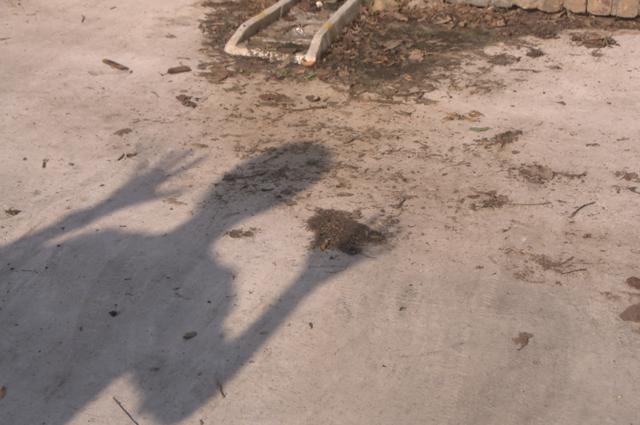}}
\caption{$\fbold$ on $\Omega$}
\label{fig:shadowed image}
\end{subfigure}
\,
\begin{subfigure}[t]{0.235\textwidth}\centering
\includegraphics[width=1\textwidth]{\detokenize{./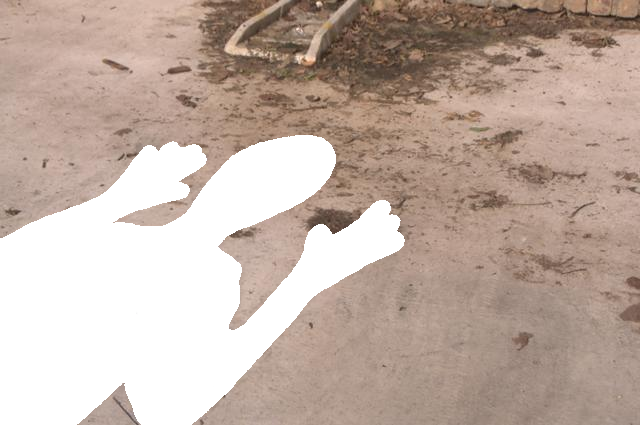}}
\caption{$\fbold$ on $\Omega_\text{out}$}
\end{subfigure}
\,
\begin{subfigure}[t]{0.235\textwidth}\centering
\includegraphics[width=1\textwidth]{\detokenize{./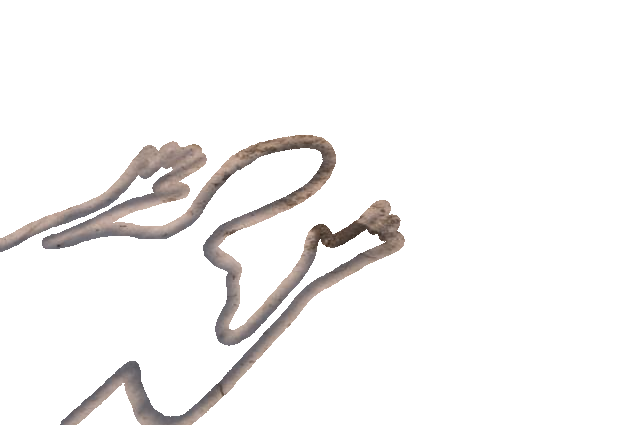}}
\caption{$\fbold$ on $\Omega_\text{sb}$}
\end{subfigure}
\,
\begin{subfigure}[t]{0.235\textwidth}\centering
\includegraphics[width=1\textwidth]{\detokenize{./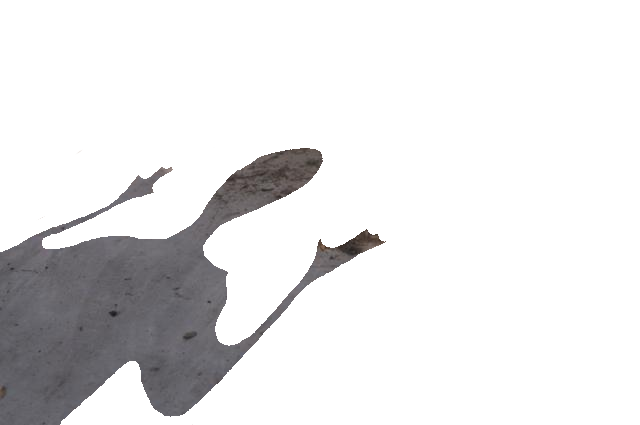}}
\caption{$\fbold$ on $\Omega_\text{in}$}
\end{subfigure}
\caption{Decomposition of $\Omega$ as in \eqref{eq:decomposition Omega} into (b) unshadowed region, (c) shadow boundaries and (d) shadowed region for a discrete shadowed image $\fbold$.}
\label{fig:decomposition_omega}
\end{figure}

Provided that a decomposition as in \eqref{eq:decomposition Omega} is given, which for real images may a challenging problem on its own regard \cite{ShadowReview2013}, the osmosis model \eqref{eq: equation weickert} can be easily adapted to solve the shadow removal problem by simply defining the vector field $\dbold$ in \eqref{eq: equation weickert} in terms of a shadowed image $f$ 
as
$\dbold:=\grad\log f$  on $\Omega_\text{in} \cup \Omega_\text{out}$ and $\dbold=0$ on  $\Omega_\text{sb}$. The continuous osmosis model adapted to shadow removal then reads

\begin{equation}
\begin{cases}
u_t = \Lap u - \div( \dbold u) &\text{on } \Omega_\text{in}\cup\Omega_{\text{out}}\times(0,T],\\
u_t = \Lap u  &\text{on } \Omega_\text{sb}\times(0,T],\\
u(\xbold,0) = f(\xbold) &\text{on } \Omega,\\
\langle\, \grad u - \dbold u, \nbold \,\rangle =0 &\text{on } \partial\Omega \times (0,T].
\end{cases}
\label{eq: equation weickert for shadow removal}
\end{equation}
The evolution on the shadow boundary $\Omega_\text{sb}$ can be interpreted as an inpainting step where information is propagated from $\Omega_\text{out}$ to $\Omega_\text{in}$ over $\Omega_\text{sb}$.  Due to the action of the Laplace operator on $\Omega_\text{sb}$, image structures in $\Omega_\text{in}\cup\Omega_{\text{out}}$ are isotropically diffused on $\Omega_\text{sb}$, resulting in a shadowless, but blurred inpainting result on $\Omega_\text{sb}$. To overcome this a post-processing inpainting step is commonly applied, as for instance in Figure \ref{fig: shadow removal with inpainting}. 

\begin{figure}[tbhp]
\centering
\begin{subfigure}[t]{0.33\textwidth}\centering
\includegraphics[width=1\textwidth]{\detokenize{./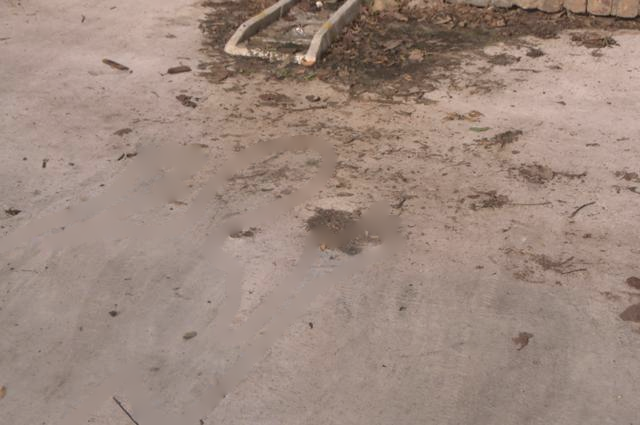}}
\captionsetup{justification=centering}
\caption{Isotropic osmosis \cite{weickert}}
\label{fig: isotropic filter}
\end{subfigure}
\hfill
\begin{subfigure}[t]{0.33\textwidth}\centering
\includegraphics[width=1\textwidth]{\detokenize{./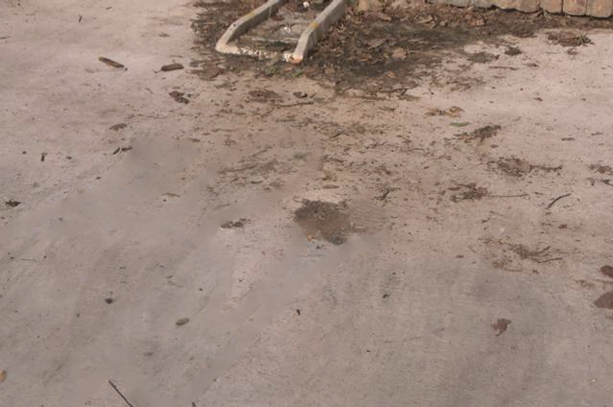}}
\captionsetup{justification=centering}
\caption{Post-processing inpainting step \cite{Arias2011}}
\label{fig: patch correction inpainting}
\end{subfigure}
\hfill
\begin{subfigure}[t]{0.32\textwidth}\centering
\vspace{-8.34em}
\includegraphics[width=0.863\textwidth,trim=8cm 5.5cm 8cm 6.85cm, clip=true]{\detokenize{./images/NCMIP/shadow_douglas.png}}
\\\vspace{0.3em}
\includegraphics[width=0.863\textwidth,trim=8cm 5.5cm 8cm 6.85cm, clip=true]{\detokenize{./images/NCMIP/inpainted_douglas_patch3x3.png}}
\captionsetup{justification=centering}
\caption{Zoom of Figure \ref{fig: isotropic filter} (top) and \ref{fig: patch correction inpainting} (bottom)}
\label{fig: patch correction inpainting zoom}
\end{subfigure}
\caption{Shadow removal for Figure \ref{fig:shadowed image} via \eqref{eq: equation weickert for shadow removal} in Figure \ref{fig: isotropic filter} and with post-processing inpainting correction in Figure \ref{fig: patch correction inpainting}
to remove the blurring artefacts due to Laplace inpainting on $\Omega_\text{sb}$ in \eqref{eq: equation weickert for shadow removal}.}
\label{fig: shadow removal with inpainting}
\end{figure}

Note that in natural images several acquisition and/or compression artefacts may render the automatic segmentation of the shadow boundary very challenging. On the other hand, its accurate manual selection may be very tedious. In many practical examples, a rough selection of $\Omega_\text{sb}$ is therefore performed manually by using a brush whose possibly large thickness may badly affect the result of the model \eqref{eq: equation weickert for shadow removal} (see Figure \ref{fig: thickness of shadow boundary}) due to the Laplace blurring artefacts  discussed above.

Vogel et al.\  \cite{vogel} have presented a discrete osmosis theory and have proven that
explicit and implicit finite difference discretisations satisfy its requirements. Different splitting schemes have been considered in \cite{Calatroni2017,ParCalDaf2018} and have been applied to imagery for cultural heritage conservation.

\subsection{Scope of the paper} 
In this paper we extend the isotropic osmosis model \eqref{eq: equation weickert} and its
shadow removal application \eqref{eq: equation weickert for shadow removal} to a model that features anisotropic diffusion in the flavour of \eqref{eq:anis_diff} and adapts concepts from anisotropic diffusion inpainting \cite{WW06,GWWB08}.
This will be implemented by incorporating local directionality depending on image orientations. We show  that with this modification we can improve the solution of the shadow removal problem, in particular overcoming blurring artefacts in the region around the shadow boundary, without additional post-processing steps. 
In the integrable case, the resulting drift-diffusion PDE can be derived as the gradient flow of a suitable energy depending on local gradient information. To estimate such local directionality, we adapt the tensor voting framework proposed in \cite{GidMed96}. For the numerical solution we combine a numerical time-stepping method based on exponential integrator techniques with the nonnegative space discretisation of Fehrenbach and Mirebeau \cite{Mirebeau}. Our model is validated on several synthetic and natural images affected by constant shadows.
Results show good light-balance properties and, compared to plain isotropic osmosis models, avoid the smoothing artefacts on the shadow boundary. An illustrative example of the performance of our model is reported in Figure \ref{fig:anisotropic_shadow}.

\begin{figure}[tbhp]
\centering
\begin{subfigure}[t]{0.32\textwidth}\centering
\includegraphics[width=0.8\textwidth]{\detokenize{./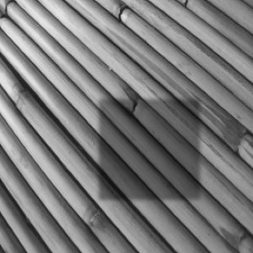}}
\caption{Shadowed image}
\label{fig: full bamboo}
\end{subfigure}
\,
\begin{subfigure}[t]{0.32\textwidth}\centering
\includegraphics[width=0.8\textwidth]{\detokenize{./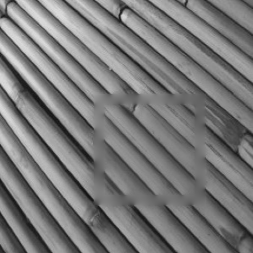}}
\caption{Isotropic osmosis \cite{weickert}}
\end{subfigure}
\,
\begin{subfigure}[t]{0.32\textwidth}\centering
\includegraphics[width=0.8\textwidth]{\detokenize{./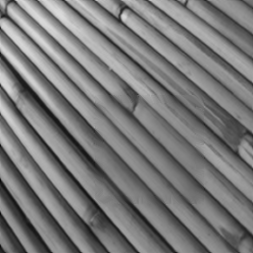}}
\caption{Proposed solution}
\end{subfigure}
\caption{Comparison of solutions obtained by solving the isotropic model considered in \cite{weickert,vogel} and our anisotropic one  to solve the shadow removal problem.}
\label{fig:anisotropic_shadow}
\end{figure}

\paragraph{Organisation of the paper.} 
In Section \ref{sec: anisotropic osmosis} we introduce the anisotropic osmosis model and study analytically its properties. Then, in Section \ref{sec:discrete theory} we study space and time discretisation schemes for the anisotropic model.
Finally, in Section \ref{sec: applications} we show the application of the anisotropic model to solve the shadow removal problem. 

\section{Anisotropic osmosis}\label{sec: anisotropic osmosis}

We present in this section a variation of the classical osmosis model \eqref{eq: equation weickert} encoding local directional information of the image in the diffusion term, propagating geometric structures dominantly along locally preferred directions. For this reason we call our model \emph{anisotropic} osmosis model in contrast to the model \eqref{eq: equation weickert} which we refer to as \emph{isotropic} osmosis model. 

In what follows we introduce the general form of our anisotropic osmosis model and state some properties of solutions that are inherited from the isotropic model. For specific choices of anisotropy we also show connections of the anisotropic osmosis model to anisotropic diffusion-based inpainting methods such as 
edge-enhancing anisotropic diffusion \cite{WW06}.
Out of these specific instances we derive our proposed anisotropic osmosis-inpainting model for shadow removal.

\subsection{Definitions and modelling}\label{sec: directional osmosis}

Let us define an anisotropic osmosis energy as  follows.
 
\begin{definition}[Anisotropic osmosis energy]  \label{eq: anisotropic energy}
Let $\Omega\subset\RR^2$, $u,v\in H^1(\Omega;\RR_+)$ be two positive images 
and let $\Wbold: \Omega \to \RR^2$ be a positive semi-definite symmetric matrix field.
We define the anisotropic osmosis energy of $u$ with respect to $\Wbold$ and the reference
image $v$ as
\begin{equation}
E(u) 
= 
\int_\Omega v(\xbold) \,  \grad^\top \!\left(\frac{u(\xbold)}{v(\xbold)}\right) \Wbold(\xbold) \, \grad \left(\frac{u(\xbold)}{v(\xbold)}\right) \diff \xbold.
\label{eq: energy with directions}
\end{equation}
\end{definition}

We will also use the following alternative notation for $E$:
\begin{equation}
E(u) 
= 
\int_\Omega v(\xbold)\; \left\|\grad \left(\frac{u(\xbold)}{v(\xbold)}\right)\right\|^2_{\Wbold}\diff \xbold
\label{eq: energy with directions equivalent}
\end{equation}
where $\|\ebold\|_{\Wbold}:=\sqrt{\langle \ebold,\, \Wbold  \ebold\rangle}$.

\begin{remark}[Isotropic case] \label{remark:isotropic}
If $\Wbold$ is the identity matrix, then \eqref{eq: energy with directions} corresponds to the isotropic osmosis energy considered in \cite{weickert}.
\end{remark}

Next we define an anisotropic osmosis evolution whose steady state minimises our
anisotropic osmosis energy.

\begin{proposition}\label{prop:ELanisomosis}
Let $v:\Omega\to\RR_+$ be a positive image, $\dbold\in \RR^2$ the vector field defined as $\dbold:=\grad\log v$ and $\Wbold: \Omega \to \RR^2$ be a positive semi-definite symmetric matrix field. Then, 
for a given positive image $f\in L^\infty(\Omega;\RR_+)$ the solution of the Euler-Lagrange equation of the functional $E$ defined in \eqref{eq: anisotropic energy} is the steady state of the 
anisotropic image osmosis model
\begin{equation}
\begin{cases}\label{eq: equation directional}
u_t = \div\left(\Wbold (\grad u - \dbold u) \right) &\text{on } \Omega\times(0,T],\\
u(\xbold,0) = f(\xbold) &\text{on } \Omega,\\
\langle\, \Wbold  \left( \grad u - \dbold u\right), \nbold \,\rangle =0 &\text{on } \partial\Omega \times (0,T].
\end{cases}
\end{equation}
\end{proposition}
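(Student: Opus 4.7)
The plan is to compute the first variation of the energy $E$ defined in \eqref{eq: anisotropic energy}, apply integration by parts (Green's formula), and then use the defining relation $\dbold = \grad \log v$ to rewrite the resulting Euler--Lagrange equation in the drift--diffusion form appearing on the right-hand side of \eqref{eq: equation directional}. Since the PDE is derived by setting this Euler--Lagrange operator equal to $u_t$, any minimiser of $E$ is automatically a stationary solution of the anisotropic osmosis evolution.

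In more detail, I would take a test function $\phi \in H^1(\Omega;\RR)$ and consider the perturbation $u + \varepsilon \phi$. Using the symmetry of $\Wbold$, the Gâteaux derivative computes to
\begin{equation*}
\left.\frac{\diff}{\diff \varepsilon}\right|_{\varepsilon=0} E(u+\varepsilon \phi)
= 2 \int_\Omega v(\xbold)\, \grad^\top\!\left(\frac{\phi(\xbold)}{v(\xbold)}\right) \Wbold(\xbold)\,\grad\!\left(\frac{u(\xbold)}{v(\xbold)}\right) \diff \xbold.
\end{equation*}
Writing $\grad(\phi/v) = (1/v) \grad \phi - (\phi/v^2)\grad v$ and factoring out $1/v$, the integrand becomes $\grad^\top \phi \cdot \Wbold \grad(u/v) - (\phi/v)\grad^\top v \cdot \Wbold \grad(u/v)$, which is not yet in divergence form; I would instead apply the divergence theorem directly to the first factor, treating $v\, \Wbold \grad(u/v)$ as the vector field:
\begin{equation*}
\left.\frac{\diff}{\diff \varepsilon}\right|_{\varepsilon=0} E(u+\varepsilon \phi)
= -2 \int_\Omega \frac{\phi(\xbold)}{v(\xbold)} \div\!\Big( v(\xbold)\, \Wbold(\xbold)\, \grad(u/v) \Big) \diff \xbold
+ 2 \int_{\partial \Omega} \frac{\phi}{v}\, \big\langle v\,\Wbold \grad(u/v),\, \nbold \big\rangle \diff s.
\end{equation*}

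Since $\phi$ is arbitrary, this yields the Euler--Lagrange equation $\div(v\, \Wbold \grad(u/v)) = 0$ in $\Omega$ together with the natural boundary condition $\langle v\, \Wbold \grad(u/v), \nbold\rangle = 0$ on $\partial \Omega$. The key algebraic identity is now
\begin{equation*}
v\, \grad\!\left(\frac{u}{v}\right) = \grad u - u\,\frac{\grad v}{v} = \grad u - \dbold\, u,
\end{equation*}
which holds precisely because $\dbold = \grad \log v = \grad v / v$. Substituting this into the Euler--Lagrange equation and into the boundary term yields $\div(\Wbold(\grad u - \dbold u)) = 0$ in $\Omega$ and $\langle \Wbold(\grad u - \dbold u), \nbold\rangle = 0$ on $\partial \Omega$, which is exactly the condition $u_t = 0$ for \eqref{eq: equation directional} together with its Neumann-type boundary condition, so $u$ is a steady state of the anisotropic osmosis model.

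The computation itself is essentially routine once the right grouping is identified; the only delicate points are (i) choosing the divergence-theorem split so that the factor $v$ is absorbed into the vector field (rather than trying to integrate $\grad(\phi/v)$ piecewise), and (ii) invoking positivity of $v$ to divide out the $1/v$ factor in front of $\phi$ when deducing the strong-form EL equation. Both are standard but worth stating carefully to keep the identification with the PDE \eqref{eq: equation directional} transparent.
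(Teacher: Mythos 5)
Your proof is correct and follows essentially the same route as the paper: compute the first variation, integrate by parts with $v\,\Wbold\grad(u/v)$ as the flux, and use the identity $v\,\grad(u/v)=\grad u-\dbold u$ to pass to the drift--diffusion form. The only (welcome) refinement is that by allowing general $H^1$ test functions you recover the Neumann-type condition as a \emph{natural} boundary condition, whereas the paper takes $\varphi\in C_c^\infty(\Omega)$ and simply invokes the boundary condition already stated in \eqref{eq: equation directional}.
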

\begin{proof}
For any test function $\varphi\in C^\infty_c(\Omega)$, we compute the optimality condition holding for any critical point $u$ of $E$.
We get:
\begin{align*}
\frac{\partial u}{\partial \tau} \left( E(u+\tau\varphi)\right)_{\vert \tau=0} 
&= 2\int_\Omega v~  \left\langle\Wbold\grad \left(\frac{u}{v}\right), \grad \left(\frac{\varphi}{v}\right)\right\rangle \diff \xbold
\\
&= 
-2\int_\Omega \div\left( v\Wbold\grad\left(\frac{u}{v}\right)\right)\frac{\varphi}{v} \diff \xbold 
\\
&= -2\int_\Omega\frac{1}{v}\div\left( v\Wbold\left(\frac{\grad u}{v} - \frac{u\grad v}{v^2}\right)\right)\varphi \diff \xbold = 0,
\end{align*}
where we have applied the divergence theorem and the Neumann boundary conditions in \eqref{eq: equation directional}. Due to the positivity of $v$ and since $\varphi$ is compactly supported in $\Omega$, we have that for any $\xbold\in\Omega$:
\begin{align*}
0 =\div\left( v\Wbold\left(\frac{\grad u}{v} - \frac{u\grad v}{v^2}\right)\right) 
=\div\left( \Wbold\left(\grad u - \frac{\grad v}{v} u\right)\right)  
\end{align*}
By definition of $\dbold= \frac{\grad v}{v}$, we note that the above corresponds to the following PDE:
\begin{align*}
\div\left(\Wbold\left(\grad u - \dbold u\right)\right) =0,
\end{align*}
which is the steady state of \eqref{eq: equation directional}.
\qedhere
\end{proof}

Similar to the isotropic osmosis PDE \eqref{eq: equation weickert}, the anisotropic model \eqref{eq: equation directional} enjoys some properties which makes it amenable for imaging applications.

\begin{theorem}  \label{theo: conservation anisotropic model}
The solution $u:\Omega\to\RR$ of the anisotropic osmosis model \eqref{eq: equation directional} satisfies the following properties:
\begin{enumerate}
\item \textbf{conservation of the average grey value}:
\[
\frac{1}{|\Omega|}\int_\Omega u(\xbold,t) \diff \xbold = \frac{1}{|\Omega|}\int_\Omega f(\xbold) \diff \xbold,\quad\text{for all } t>0;
\]
\item \textbf{preservation of non-negativity}: 
\[
u(\xbold,t)\geq 0,\quad \text{for all } \xbold\in\Omega\quad \text{and}\quad t>0;
\]
\item \textbf{non-constant steady states}: 
The steady state of \eqref{eq: equation directional} is given by 
\begin{equation}   \label{eq: steady state}
w(\xbold):=\frac{\mu_f}{\mu_v}v(\xbold).
\end{equation}
\end{enumerate}
\end{theorem}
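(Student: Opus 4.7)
My plan is to handle the three properties in the order stated, since each builds modestly on the previous one.

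For property 1 (conservation of the average grey value), the approach is direct: I would integrate the PDE in \eqref{eq: equation directional} over $\Omega$, exchange the time derivative with the spatial integral, and apply the divergence theorem to convert the right-hand side into a boundary flux integral. The Neumann-type condition $\langle \Wbold(\grad u - \dbold u), \nbold \rangle = 0$ makes that integral vanish, so $\int_\Omega u(\xbold, t)\, \diff\xbold$ is constant in time and equal to $\int_\Omega f(\xbold)\, \diff\xbold$. This step mimics the isotropic argument of \cite{weickert} and I expect it to be routine.

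For property 2 (preservation of non-negativity), the key trick is the change of variables $w := u/v$, which is well-defined since $v>0$. Using $\dbold = \grad v/v$ one checks that $\grad u - \dbold u = v\, \grad w$, so the PDE collapses to the weighted parabolic equation $v\, w_t = \div(v\Wbold \grad w)$ on $\Omega \times (0,T]$ with the homogeneous Neumann-type condition $\langle v\Wbold \grad w, \nbold\rangle = 0$. I would then invoke a weak maximum principle of Stampacchia type: testing against the negative part $w_- := \min(w,0)$ and integrating in space yields $\frac{\diff}{\diff t}\int_\Omega v\, |w_-|^2 \, \diff\xbold \le 0$, using positivity of $v$ and semi-definiteness of $\Wbold$. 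Since $w_-(\cdot,0) = 0$ from $f \ge 0$, this forces $w_- \equiv 0$ for all $t>0$, and hence $u = vw \ge 0$.

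For property 3 (non-constant steady state), the verification is a direct computation. Setting $w(\xbold) = c\, v(\xbold)$, I compute $\grad w - \dbold w = c\grad v - c(\grad v/v)v = 0$, so $\Wbold(\grad w - \dbold w)$ and its divergence vanish identically and the boundary condition is trivially satisfied. The constant $c$ is then pinned down by property 1: preservation of the spatial mean forces $\frac{1}{|\Omega|}\int_\Omega w\, \diff\xbold = \mu_f$, giving $c = \mu_f/\mu_v$ and recovering \eqref{eq: steady state}.

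The main obstacle, in my view, is the weak maximum principle for property 2, because $\Wbold$ is only assumed positive semi-definite rather than uniformly elliptic. The $w$-reformulation is what saves the argument: once the drift has been absorbed into the change of variables, the remaining weighted diffusion is still dissipative in $L^2(\Omega; v\, \diff\xbold)$, so the truncation argument goes through without uniform ellipticity. Strict uniqueness of the steady state among functions with a prescribed average grey value is not claimed explicitly in the theorem, but it would follow from monotonicity of $E$ along the flow, guaranteed by Proposition~\ref{prop:ELanisomosis}, combined with the mean-preservation constraint, provided $\Wbold$ has a trivial null space; in the degenerate case one may get a richer stationary set, but \eqref{eq: steady state} is always a member of it.
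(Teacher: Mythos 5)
Your proof of properties (i) and (iii) follows the paper essentially verbatim: integrate the PDE and kill the boundary flux with the Neumann condition for (i), and verify that $w=cv$ annihilates $\grad w - \dbold w$ and fix $c$ by mass conservation for (iii). (Like the paper, you correctly flag that uniqueness of the steady state within the stationary set is the delicate part; the paper's phrase ``the process is then forced to a non-negative steady state of such a form'' glosses over this in the same way.) For property (ii), however, you take a genuinely different and, in my view, cleaner route. The paper argues pointwise: it takes the first time $T$ at which $\min u = 0$, assumes the minimum is interior, observes that $\grad u = u = 0$ there reduces the equation to $u_t = \Wbold\cdot\Drm^2 u$, and then appeals to a generalised weak minimum principle for degenerate parabolic operators (Protter--Weinberger). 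You instead exploit the integrable structure $\dbold=\grad v/v$ (which the theorem does assume, via Proposition~\ref{prop:ELanisomosis}) to substitute $w=u/v$, obtaining the symmetric divergence-form equation $v\,w_t=\div(v\Wbold\grad w)$, and then run a Stampacchia truncation argument in $L^2(\Omega; v\,\diff\xbold)$. Your computation $\grad u - \dbold u = v\grad w$ is correct, the boundary term vanishes by the Neumann condition, and positive semi-definiteness of $\Wbold$ is exactly what makes $\langle \Wbold\grad w_-,\grad w_-\rangle\ge 0$, so no ellipticity is needed. What each approach buys: the paper's pointwise argument does not use $\dbold=\grad\log v$ and so would extend to non-integrable drift fields, but it leans on classical regularity and on a maximum principle for degenerate operators that is only cited, and it silently discards the possibility that the minimum is attained on $\partial\Omega$; your energy argument is self-contained and robust under degeneracy, at the price of being tied to the integrable case. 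The only point you should make explicit is the standard chain rule for the truncation $w\mapsto w_-$ under the parabolic regularity you are assuming (e.g.\ $w\in L^2(0,T;H^1)$ with $w_t\in L^2(0,T;H^{-1})$), which justifies writing $\int_\Omega v\,w_t w_-\,\diff\xbold = \tfrac12\tfrac{\diff}{\diff t}\int_\Omega v\,|w_-|^2\,\diff\xbold$.
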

\begin{proof}
We follow \cite{weickert} and prove statements (i)--(iii) in turn.
\begin{enumerate}
\item  Let $\mu_u(t):= \frac{1}{|\Omega|}\int_\Omega u(\xbold,t) \diff \xbold$ be the average grey value of the image $u$ at time $t\geq 0$. Applying the divergence theorem and the homogeneous Neumann boundary conditions in \eqref{eq: equation directional} we obtain:
\[
\begin{aligned}
\frac{d \mu_u}{d t} 
= 
\frac{1}{|\Omega|} \int_\Omega u_t \diff \xbold 
&= 
\frac{1}{|\Omega|}\int_\Omega \div \left( \Wbold  \left( \grad u - \dbold u \right)\right) \diff \xbold\\
&=
\int_{\partial \Omega} \langle\, \Wbold  \left( \grad u - \dbold u\right), \nbold  \,\rangle \diff S = 0,
\end{aligned}
\]
and the statement follows.
\item 
Assume that $T > 0$ is the smallest time such that $\min_{\xbold,t} u(\xbold, t) = 0$, and that
this minimum is attained in some inner point $\mubold\in\Omega$. Since we have $\grad u(\mubold, T) = u(\mubold,T)=0$, we deduce:
\begin{equation}
\begin{aligned}
u_t(\mubold,T) 
=& 
\div\left(\Wbold \grad u(\mubold, T)\right) - \div\left(\Wbold \dbold u(\mubold, T)\right)
\\
=& 
\Wbold \cdot \Drm^2 u(\mubold, T) + 
\left(\div(\Wbold )- \Wbold \dbold\right)\cdot\underbrace{\grad u(\mubold, T)}_{=0} 
-\div(\Wbold \dbold)\underbrace{u(\mubold,T)}_{=0}  
\\
=& 
\Wbold \cdot \Drm^2 u(\mubold, T),
\label{eq: point2 proof}
\end{aligned}
\end{equation}
that is in correspondence to the point $(\mubold,T)$, the anisotropic osmosis equation behaves like the diffusion equation
\begin{equation}  \label{eq: PDE diffusion}
u_t =  \Wbold \cdot \Drm^2 u,
\end{equation}
with non-constant positive semidefinite diffusivity matrix $\Wbold$. For such equations a generalisation of the weak minimum/maximum principle (which holds in its classical form only for positive definite,
i.e.\ elliptic, operators) holds true, see, e.g.\  \cite{ProtterWeinberger}. Therefore, for any $t\geq T$ the solution of the anisotropic model remains non-negative and since the solution has been further assumed to be strictly positive for all $t< T$, including $t=0$ due to the positivity of $f$, we have that it stays actually non-negative for any $t\geq 0$.

\item For any $c\in\RR$, the function $w:=cv$ endowed with the Neumann-type boundary conditions in \eqref{eq: equation directional} solves the steady state equation of the system anisotropic PDE, since there trivially holds:
$$
\div\left(\Wbold \left(\grad w - \frac{\grad v}{v}w\right)\right)=\div\left(\Wbold \left(c\grad v - \frac{\grad v}{v}cv\right)\right) = 0.
$$
Due to mass and non-negativity conservation, the process is then forced to a non-negative steady state solution $w$ of such a form. Furthermore, the constant $c\in\RR$ can be easily found by noticing
$$
c\mu_v=\frac{1}{|\Omega|}\int_\Omega cv(\xbold) \diff \xbold = \frac{1}{|\Omega|}\int_\Omega w(\xbold) \diff \xbold= \frac{1}{|\Omega|}\int_\Omega f(\xbold) \diff \xbold=\mu_f,
$$
whence $c=\mu_f/\mu_v$ which is well defined since $v$ is strictly positive in $\Omega$.\qedhere
\end{enumerate}
\end{proof}



\subsection{Anisotropic diffusion inpainting}

Anisotropic diffusion inpainting with a diffusion tensor has been introduced in \cite{WW06} and applied
successfully for inpainting-based compression \cite{GWWB08,SPME14}. It exploits the edge-enhancing anisotropic diffusion filter that has been proposed for image denoising \cite{weickert98}. In order to propagate structures from specified image regions into inpainting
regions, one uses the differential operator
$\div(\Dbold(\grad u_\sigma)\grad u)$, where $u_\sigma$ denotes the convolution of
$u$ with a Gaussian of standard deviation $\sigma$. The diffusion tensor $\Dbold$ has 
eigenvectors perpendicular and parallel to $\grad u_\sigma$. Its corresponding eigenvalues
are given by
\begin{equation}
 \mu_1(|\grad u_\sigma|)=1\quad\text{and}\quad
 \mu_2(|\grad u_\sigma|)=\frac{1}{\sqrt{1+|\grad u_\sigma|^2/\lambda^2}},
\end{equation}
with some contrast parameter $\lambda>0$.
Thus, the goal is to inpaint fully in the direction of an oriented structure, and to reduce the inpainting
perpendicular to a structures, if its contrast is large. Processes of this type can inpaint edge-like
structures even when the specified data are sparse and the gaps to be bridged are large \cite{SPME14}.
However, they are not well-suited to shadow removal problems, since the shadow boundaries 
create unphysical edges. Therefore, we will have to modify these ideas such that the local structure directions become more robust w.r.t.\  shadow boundaries. To this end, we will consider and modify more refined structure descriptors such as tensor voting. This will be done next.

\subsection{Computation of structure directions via tensor voting}
\label{sec: computation of directions}

In this section we present a work-flow which is locally insensitive to the light jump produced by the shadow. This will serve us to force anisotropy on $\Omega_{\text{sb}}$ along suitable directions.


A standard way to provide an estimate of the local structure orientation in an image $u$ consists in computing the eigenvector $\ebold_1$ associated to the leading eigenvalue $\lambda_1$ of the structure tensor $\Jbold_\rho(u)$ \cite{weickert98} associated to $u$. 
By fixing $\sigma,\rho>0$ to be the pre- and post-smoothing parameters, we recall that the structure tensor $\Jbold_\rho(u)$ of an image $u$ is defined as:
\[
\Jbold_\rho(u) := K_\rho \ast \Jbold_0(u),\quad\text{with}\quad \Jbold_0(u):= \grad u_\sigma \otimes \grad u_\sigma,
\]
where $u_\sigma := K_\sigma \ast u$ is a smoothed version of the image $u$ and $K_\sigma,K_\rho$ are Gaussian convolution kernels. 
Usually, $\sigma$ and $\rho$ are chosen so that $\sigma\ll \rho$, where $\sigma$ is associated to the noise scale and $\rho$ integrates the orientation information.  
Denoting by $\lambda_1\geq \lambda_2$ the eigenvalues of $\Jbold_\rho(u)$ and by $\ebold_1,\ebold_2$ the associated eigenvectors, a classification of the different structural image information in terms of the size of $\lambda_1$ and $\lambda_2$ can be made following, e.g.,  \cite{HarrisStephens88,Forstner86,KassWitkin85}. 
For any $\xbold\in\Omega$ we thus have:
\begin{itemize}
\item If $\lambda_1(\xbold)\approx \lambda_2(\xbold)\approx  0$, then $\xbold$ is likely to belong to a homogeneous region;
\item If $\lambda_1(\xbold)\gg \lambda_2(\xbold)\approx  0$, then $\xbold$ is likely to lie on an edge;
\item If $\lambda_1(\xbold)\approx \lambda_2(\xbold)\gg 0$, then $\xbold$ is likely to be a corner point.
\end{itemize}

\emph{Tensor Voting} has been originally introduced in \cite{GidMed96} for extracting curves in noisy images by means of the grouping of local features consistent in a neighbourhood of the measurements.
Such framework improves the robustness of structure tensor estimation in presence of noise and image artefacts \cite{MorBurWeiGarPui12}. 
Assuming that a generic 2-tensor $\Bbold$ in $\RR^2$ has the following matrix representation:
\begin{equation}
\Bbold =
\begin{pmatrix}
b_{11} & b_{12} \\
b_{21} & b_{22}
\end{pmatrix}
= 
\lambda_1 (\ebold_1 \otimes \ebold_1) + \lambda_2 (\ebold_2 \otimes \ebold_2)
\label{eq: matrix expansion}
\end{equation}
where $\lambda_{1}, \lambda_2$ are the eigenvalues associated to the eigenvectors $\ebold_{1}$ and $\ebold_{2}$, respectively,
we have that \eqref{eq: matrix expansion} can be equivalently rewritten as
\[
\Bbold = (\lambda_1-\lambda_2) (\ebold_1 \otimes \ebold_1) + \lambda_2 (\ebold_1 \otimes \ebold_1 + \ebold_2 \otimes \ebold_2).
\]
We can now distinguish the two following quantities:
\begin{itemize}
\item $(\lambda_1-\lambda_2)$ is called \emph{saliency} or \emph{stickness}.
It provides an estimation of the confidence on the direction $\ebold_1$ and it is also called \emph{orientation certainty} or \emph{anisotropy measure};
\item $\lambda_2$ is called \emph{ballness} and it measures the size of the minor-axis of the anisotropy ellipse. 
Since it is in some sense a measure of how the estimation of the main estimated direction is contradicted, it is often called \emph{orientation uncertainty} or \emph{junctionness}.
\end{itemize}
The tensor voting operation consists in adding, at each iteration, the contribution of neighbourhood tensors for each point in the domain, resulting in an enhanced tensor field due to the presence of saliency parameter.
In \cite{FraAlmRonFloRom06} the authors show that the complexity of the original approach may be time-consuming, even for small images. 
Thus, they propose an efficient computation of the tensor voting framework based on steerable filters theory, i.e.\ complex-valued convolutions. 

For the shadow removal application we are considering, the estimation of the structure direction $\ebold_1$  may be affected by the shadow edges, which do not correspond to the actual underlying image structures.
The presence of such edges make the use of either the structure tensor or the tensor voting very challenging for estimating the local directions.

To circumvent this problem, we propose a modification of the tensor voting framework by means of interpreting shadow edges as bias in the estimation of the structure.
From the given initial blurred image $u_\sigma$, we firstly compute the local \emph{orientation} $\theta$ of the gradient from the eigenvector $\ebold_2$.
Secondly, we compute the saliency of the given shadowed image. Finally we apply the tensor voting framework with a modified saliency and orientation on $\Omega_\text{sb}$ so as to mark the shadow boundaries as bias: there, the saliency is zeroed and the orientation is initialised at random.
Details on the algorithm and results are presented in Section \ref{sec: estimation local direction via tensor voting}.


\subsection{An anisotropic osmosis-inpainting model for shadow removal}



Let $f:\Omega\to\RR_+$ be a positive greyscale image with a constant shadow and let $\Omega$ be decomposed as 
in \eqref{eq:decomposition Omega}.
We propose the following structure-preserving osmosis model for solving the shadow removal problem:
\begin{equation}
\begin{dcases}
u_t = \div\left(\Wbold(\grad u - \dbold u \right)) &\text{on } \Omega\times (0,T],\\
u(\xbold,0) = f(\xbold) &\text{on }\Omega,\\
\langle\, \Wbold \left( \grad u - \dbold u\right),\nbold\,\rangle =0 &\text{on }\partial\Omega\times (0,T],
\end{dcases}
\label{eq: general shadow removal}
\end{equation}
Here we define the discontinuous vector field $\dbold$ and the discontinuous matrix field $\Wbold$ as
\begin{eqnarray}   
\label{eq:field_matrix}
\dbold(\xbold)
&=&
\begin{dcases}
\grad\log f,&\text{if } \xbold\in\Omega \setminus \Omega_\text{sb},\\
0,&\text{if } \xbold\in\Omega_\text{sb},
\end{dcases} 
\\
\Wbold(\xbold)
&=&
\begin{dcases}
\Ibold,\quad&\text{if } \xbold\in\Omega \setminus \Omega_\text{sb},\\
\epsilon (\ebold_1 \otimes \ebold_1) + 1 (\ebold_2 \otimes \ebold_2)
,\quad&\text{if } \xbold\in\Omega_\text{sb}. 
\label{eq:w_field}
\end{dcases}
\end{eqnarray}
where $\Ibold$ denotes the $2\times2$ identity matrix, and $\ebold_1$ and $\ebold_2$ are the directions from our modified tensor voting applied to the initial image $f$.


This means, an isotropic osmosis evolution is performed in $\Omega_\text{sb}^c$, while an anisotropic  inpainting is performed on $\Omega_\text{sb}$. In other words, classical osmosis balances image intensity in the shadowed region with respect to the unshadowed regions, while the nonlinear interpolation preserves structures and avoids blurring on the shadow boundary. The proposed model performs the osmosis and the inpainting step jointly and avoids any post-processing step.


\section{Space and time discretisation}  \label{sec:discrete theory}


 
We now discuss appropriate discretisations of the anisotropic osmosis model that are consistent with the continuous model \eqref{eq: equation directional} and preserve some of its properties as stated in Theorem \ref{theo: conservation anisotropic model}. To do so, we consider a discrete rectangular image domain with $M\times N$ pixels. Let $S:=MN$. The given positive initial image $\fbold$ is then defined as a vector in $\RR^S_{+}$. For a given grid step size $h>0$, we denote by $\ubold=(u_{i,j})_{i,j}$ the approximation of the function $u$ and with $u_{i,j}$ its approximated value in suitable discretisation nodes $((i-\frac{1}{2})h,(j-\frac{1}{2})h)$ with $i=1,\ldots,M$ and $j=1,\ldots,N$. 
Similarly, for $k\geq 0$ we denote by $u^k_{i,j}$ the value of $u_{i,j}$ at the time node $t_k=k \tau$, where $\tau$ is the time step size. 
Also, for $\xbold\in\Omega$, we denote by $\lambdabold_1,\lambdabold_2\in\RR_+^S$ the discretised eigenvalues $\lambda_1(\xbold)$ and $\lambda_2(\xbold)$, while by $\thetabold\in[0,2\pi)^S$ the discretised orientation $\theta(\xbold)$.

\subsection{Discrete osmosis theory}

A discrete theory for osmosis models has been established in \cite{vogel}. Since it is also applicable to the anisotropic 
setting, we report here the general result \cite[Proposition 1]{vogel} and list in the following some discrete solvers fulfilling its assumptions. 

\begin{theorem}[\cite{vogel}] \label{theo:vogel}
For a given  $\fbold\in\RR^S_+$, consider the fully-discretised problem:
\begin{equation}   \label{eq:semi_discrete}
\ubold^0 = \fbold, \qquad \ubold^{k+1} =\Pbold \ubold^k,\qquad k\geq 1,
\end{equation}
where the (unsymmetric) matrix $\Pbold\in \RR^{S\times S}$ is an irreducible, non-negative matrix with strictly positive diagonal entries and unitary column sum . 
Then the following properties hold true:
\begin{enumerate}
\item The evolution preserves positivity and the average grey value of $\fbold$;
\item The eigenvector of $\Pbold$ associated to eigenvalue $1$ is the unique steady state for $k\to\infty$.
\end{enumerate}
\end{theorem}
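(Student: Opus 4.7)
The plan is to handle the two conclusions by exploiting three separate algebraic properties of $\Pbold$: its entrywise non-negativity, its unitary column sums, and its irreducibility combined with strictly positive diagonal. Each hypothesis takes care of a distinct piece of the statement, and only the final claim---uniqueness and attainment of the steady state---requires the full machinery of Perron--Frobenius theory. Throughout I would denote by $\mathbf{1}\in\RR^S$ the all-ones vector.

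\textbf{Positivity and conservation of the average grey value.} Preservation of non-negativity is immediate by induction: if $\ubold^k\ge 0$ componentwise, then $\ubold^{k+1}=\Pbold\ubold^k$ is also componentwise non-negative since $\Pbold$ has only non-negative entries; the base case $\ubold^0=\fbold>0$ is granted by assumption. For the average grey value, the unitary column-sum hypothesis is equivalent to $\mathbf{1}^\top\Pbold=\mathbf{1}^\top$, and therefore
$$\mathbf{1}^\top\ubold^{k+1}=\mathbf{1}^\top\Pbold\ubold^k=\mathbf{1}^\top\ubold^k,$$
so that the discrete mass $\mathbf{1}^\top\ubold^k$ equals $\mathbf{1}^\top\fbold$ for every $k\ge 0$, and dividing by $S$ gives the claimed conservation.

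\textbf{Steady state.} For the second statement I would invoke Perron--Frobenius in three steps. First, the unitary column-sum hypothesis yields $\|\Pbold\|_1=1$, hence $\rho(\Pbold)\le 1$, while the relation $\mathbf{1}^\top\Pbold=\mathbf{1}^\top$ exhibits $1$ as a left eigenvalue, so $\rho(\Pbold)=1$. Second, irreducibility together with non-negativity allows me to apply the classical Perron--Frobenius theorem: the spectral radius is a simple eigenvalue whose associated right eigenvector $\vbold$ can be chosen componentwise positive, and any other non-negative eigenvector is a multiple of $\vbold$. Third, to upgrade this spectral information into an actual convergence statement I need aperiodicity; this is where the assumption of strictly positive diagonal entries enters, since self-loops at every node of the directed graph of $\Pbold$ force the greatest common divisor of all cycle lengths to equal $1$, so $\Pbold$ is primitive and $1$ is the only element of the spectrum lying on the unit circle.

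Combining these three ingredients, the Perron projector of $\Pbold$ equals $\vbold\,\mathbf{1}^\top/(\mathbf{1}^\top\vbold)$, and $\Pbold^k$ converges to this rank-one matrix as $k\to\infty$ with a geometric rate governed by the second-largest eigenvalue modulus, so that
$$\ubold^k=\Pbold^k\fbold\;\longrightarrow\;\frac{\mathbf{1}^\top\fbold}{\mathbf{1}^\top\vbold}\,\vbold,$$
which is the scalar multiple of the Perron eigenvector singled out by the conserved average grey value. I expect the primitivity step to be the main obstacle: the non-negative, irreducible case of Perron--Frobenius by itself only guarantees that $\rho(\Pbold)$ is a simple eigenvalue, and without aperiodicity the iterates could oscillate along the unit circle; the elementary observation that a positive diagonal upgrades irreducibility to primitivity is the pivotal technical ingredient on which convergence hinges, and it is the only place where the three separate hypotheses on $\Pbold$ must be combined simultaneously rather than used in isolation.
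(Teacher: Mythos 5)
The paper itself gives no proof of this statement: it is imported verbatim as \cite[Proposition~1]{vogel} and used as a black box, so there is nothing internal to compare against. Your Perron--Frobenius argument is correct and is essentially the standard proof of this result (and the one underlying the cited reference): the column-sum condition gives $\mathbf{1}^\top\Pbold=\mathbf{1}^\top$ and hence both mass conservation and $\rho(\Pbold)=1$; irreducibility gives simplicity of the Perron eigenvalue with a positive eigenvector; and the positive diagonal upgrades irreducibility to primitivity, which is indeed the pivotal ingredient that rules out peripheral spectrum and yields actual convergence of $\Pbold^k$ to the rank-one Perron projector. The only small omission is in part (i): your induction only establishes preservation of \emph{non-negativity}, whereas the theorem asserts preservation of positivity. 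The strictly positive diagonal entries are needed here too, not only for primitivity: if $\ubold^k>0$ componentwise, then $(\Pbold\ubold^k)_i\ge p_{ii}\,u^k_i>0$, so strict positivity propagates. With that one line added, the proof is complete.
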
 

As shown in \cite{vogel} for the isotropic osmosis model, standard explicit and implicit time finite difference
schemes fit this framework, the former being subject to time step size restrictions, the latter being unconditionally stable. Furthermore, both schemes converge to the space discretisation of the elliptic steady
state for every stable time step size. For the implicit scheme with a BiCGStab solver, Vogel et al.\ \cite{vogel} 
report speed-ups of
two orders of magnitude compared to the explicit method. In \cite{Calatroni2017} a Peaceman-Rachford splitting is shown to satisfy Theorem \ref{theo:vogel}  under a time step size restriction, while the additive operator splitting (AOS) considered in \cite{ParCalDaf2018} fulfils Theorem \ref{theo:vogel} for all time step sizes. Splitting schemes such as the AOS method, however, do not converge to the space-discrete elliptic steady state solution unless the time step size goes to zero. In practice, keeping this time splitting error under control imposes bounds on the time step size.

Note that in contrast to the theory for fully discrete diffusion filters \cite{weickert98}, the discrete osmosis theory in Theorem \ref{theo:vogel} does not require a symmetric matrix. Since it does also not involve any isotropy assumption, it is basically also applicable to anisotropic osmosis processes, if suitable space discretisations are employed. This, however, is not straightforward and will be discussed in the sequel.

\subsection{Space discretisation with the AD-LBR stencil} \label{sec:AD-LBR}
In what follows, we describe the discretisation of the weighting matrix $\Wbold$ and the differential operators $\div$ and $\grad$ for a grey-scale image of height $M$ and width $N$ unrolled as $\ubold\in\RR^{S}$, with $S=MN$, that defines the spatial discretisation matrix $\Abold$ for the semi-discrete osmosis problem
\begin{equation}\label{eq:ODE}
\begin{dcases}
\ubold'(t) = \Abold \ubold(t),&\text{for } t\in (0,T],\\
\ubold(0)= \fbold,
\end{dcases}
\end{equation}
where $T>0$ is a positive final time.

While diffusion processes are stable in many aspects, e.g.\ in terms of decreasing $L^2$ norms and decreasing $L^\infty$ norms, the stability of osmosis processes is restricted to essentially one
key property: the preservation of nonnegativity. Thus, any suitable space discretisation for osmosis filters should
guarantee that it is nonnegativity preserving. For the matrix $\Abold$ this implies that all off-diagonal elements
must be nonnegative. While this is easily satisfied for standard discretisations of isotropic processes \cite{vogel},
it becomes much  more challenging for anisotropic approaches: The drift term is fairly unproblematic 
and can be handled e.g.\ with classical upwind discretisations. However, most discretisations of the diffusion term
$\div (\Wbold\grad u)$ are only stable in the $L^2$ norm \cite{weickert2013b}. Thus, they cannot guarantee
preservation of nonnegativity. Nonnegativity-preserving discretisations can be found in \cite{weickert98,MN01a,Mirebeau}. In our paper, we use the stencil of Fehrenbach and Mirebeau \cite{Mirebeau}, since it is a fairly sophisticated nonnegativity-preserving method that has been reported to
give good results. This anisotropic diffusion discretisation relies on lattice basis reduction ideas and is called
AD-LBR.  Let us sketch its underlying ideas.

In \cite{Mirebeau}, the authors tackle the minimisation of an anisotropic energy 
which in our notation reads:
\[
E(u) = 
\int_\Omega \| \grad u(\xbold) \|_{\Wbold}^2 \diff \xbold,
\]
where $\|\ebold\|_\Wbold:=\sqrt{\langle \ebold,\, \Wbold \ebold\rangle}$, for any $\ebold\in\RR^d$ and $\Wbold$ is symmetric and positive definite.  
The idea is to introduce a discretisation $E_h$ of the energy above on the discretised domain $\Omega_h, h>0$ via a sum of weighted squared differences of $u\in L^2(\Omega_h;\RR)$, i.e.:
\begin{equation}
E_h(u) 
= 
h^{d-2} 
\sum_{\xbold\in\Omega_h} \sum_{\ebold\in V(\xbold)} \gamma_x(\ebold) |u(\xbold+h\ebold)-u(\xbold)|^2,
\label{eq: E_h mirebeau}
\end{equation}
where $V(\xbold)\subset\mathbb{Z}^d$ is the stencil and $\gamma_\xbold(\ebold)\geq 0$ are the associated weights. 
The key step is the linearisation of $u(\xbold+h\ebold)$ as $u(\xbold)+\langle \grad u(\xbold),\, h\ebold\rangle $, which shows that for each $\xbold\in\Omega_h$ and smooth $u$ one can write:
\[
h^d \| \grad u\|_{\Wbold}^2 
= 
h^{d-2} \sum_{\ebold\in V(\xbold)} \gamma_\xbold(\ebold) \langle \grad u(\xbold), h\ebold \rangle^2,
\]
which turns out to be equivalent to require the condition
$\Wbold = \sum_{\ebold\in V(\xbold)} \gamma_\xbold(\ebold) \ebold \ebold^\T
$.
via the following Lemma \cite[Lemma 1]{Mirebeau}.
\begin{lemma}
Let $\ebold_0,\ebold_1,\ebold_2\in\RR^2$ be such that $\ebold_0+\ebold_1+\ebold_2=0$ and $|\mathrm{det}(\ebold_1,\ebold_2)|=1$. Then for any symmetric positive definite matrix $\Wbold$, there holds:
\[
\Wbold = -\sum_{0\leq i \leq 2} \langle \ebold_{i+1}^\perp,\,\Wbold \ebold_{i+2}\rangle \ebold_i \ebold_i^\T,
\]
under the convention $\ebold_{3+i} := \ebold_i$ 
\end{lemma}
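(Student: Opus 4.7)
The approach is a standard linear-algebraic reduction: both sides of the identity are symmetric $2\times 2$ matrices depending linearly on $\Wbold$, and the space of symmetric $2\times 2$ matrices has dimension three, so it suffices to verify the identity on a basis of that space. I propose to use the three rank-one matrices $\{\ebold_i\ebold_i^\T\}_{i=0,1,2}$ as this basis, together with the three test matrices $\Wbold=\ebold_k\ebold_k^\T$ for $k=0,1,2$ to check the linear statement.

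First I would check that these three rank-one matrices are linearly independent (and hence form a basis of symmetric $2\times 2$ matrices). Using $\ebold_0=-\ebold_1-\ebold_2$, one expands $\ebold_0\ebold_0^\T=\ebold_1\ebold_1^\T+\ebold_2\ebold_2^\T+(\ebold_1\ebold_2^\T+\ebold_2\ebold_1^\T)$. Any dependence relation $a_0\ebold_0\ebold_0^\T+a_1\ebold_1\ebold_1^\T+a_2\ebold_2\ebold_2^\T=\bm{0}$ then forces $a_0=0$ by reading off the coefficient of the mixed symmetric term $\ebold_1\ebold_2^\T+\ebold_2\ebold_1^\T$ (legitimate because $(\ebold_1,\ebold_2)$ is an independent pair, thanks to $|\det(\ebold_1,\ebold_2)|=1$), after which $a_1=a_2=0$ follows immediately from the reduced relation $a_1\ebold_1\ebold_1^\T+a_2\ebold_2\ebold_2^\T=\bm{0}$.

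Next, for the rank-one choice $\Wbold=\ebold_k\ebold_k^\T$ each summand on the right factors as $\langle\ebold_{i+1}^\perp,\ebold_k\ebold_k^\T\ebold_{i+2}\rangle\,\ebold_i\ebold_i^\T = \det(\ebold_{i+1},\ebold_k)\,(\ebold_k\cdot\ebold_{i+2})\,\ebold_i\ebold_i^\T$, where I use the standard identity $\ebold^\perp\cdot\fbold=\det(\ebold,\fbold)$ in $\RR^2$. Two algebraic consequences of the hypotheses then drive the simplification: the relation $\ebold_0+\ebold_1+\ebold_2=0$ implies the cyclic equality $\det(\ebold_i,\ebold_{i+1})=\det(\ebold_1,\ebold_2)$ for all $i\bmod 3$, and the normalisation $|\det(\ebold_1,\ebold_2)|=1$ ensures these pairwise determinants are $\pm 1$. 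Because the statement is invariant under simultaneous cyclic relabelling of the three indices, only one value of $k$, say $k=0$, needs to be checked in detail; the other two follow by symmetry.

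The main obstacle I expect is the careful sign and index bookkeeping across the nine terms that arise from expanding the sums, in particular identifying which of them vanish because of $\det(\ebold_j,\ebold_j)=0$. For this reason I would first run an independent sanity check on a concrete configuration such as $\ebold_1=(1,0)$, $\ebold_2=(0,1)$, $\ebold_0=(-1,-1)$ with $\Wbold=\Ibold$, to catch any misplaced sign, swapped index, or misapplied perpendicular operation before committing to the general computation; any mismatch there would point to a typographical issue to be reconciled with the author's intended formulation. A robust fallback, should the direct matrix expansion prove unwieldy, is to view both sides as symmetric bilinear forms on $\RR^2$ and test the scalar identity $\xbold^\T\Wbold\ybold=-\sum_i\langle\ebold_{i+1}^\perp,\Wbold\ebold_{i+2}\rangle(\ebold_i\cdot\xbold)(\ebold_i\cdot\ybold)$ only on the three pairs $(\xbold,\ybold)\in\{(\ebold_1,\ebold_1),(\ebold_2,\ebold_2),(\ebold_1,\ebold_2)\}$, which again reduces everything to a small number of explicit scalar verifications using the same cyclic determinantal identities.
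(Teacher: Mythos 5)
The paper never proves this lemma---it is imported verbatim as \cite[Lemma 1]{Mirebeau}---so there is no in-paper argument to compare against; I therefore assess your plan on its own terms. Your reduction is the right one: both sides are linear in $\Wbold$, the matrices $\ebold_i\ebold_i^\T$ span the symmetric $2\times 2$ matrices (your independence argument via $\ebold_0\ebold_0^\T=\ebold_1\ebold_1^\T+\ebold_2\ebold_2^\T+\ebold_1\ebold_2^\T+\ebold_2\ebold_1^\T$ is fine), and the cyclic identity $\det(\ebold_i,\ebold_{i+1})=\det(\ebold_1,\ebold_2)=\pm 1$ is exactly the right tool. But the proposal stops at the plan stage, and the one computation you defer is the decisive one, because the statement as printed is false: the second argument should be $\Wbold\ebold_{i+2}^{\perp}$, not $\Wbold\ebold_{i+2}$. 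Your own proposed sanity check exposes this: with $\ebold_1=(1,0)$, $\ebold_2=(0,1)$, $\ebold_0=(-1,-1)$, $\Wbold=\Ibold$, and $\ebold^{\perp}$ the rotation of $\ebold$ by $\pi/2$, the right-hand side as printed evaluates to $\left(\begin{smallmatrix}-2&-1\\-1&-2\end{smallmatrix}\right)$ rather than $\Ibold$ (the opposite sign for the other rotation convention, but never $\Ibold$). The correct two-perp form is the one implicit in the weight formula $\gamma_\xbold(\pm\ebold_i)=-\tfrac12\langle\ebold_{i+1}^\perp,\Wbold\ebold_{i+2}^\perp\rangle$ that the paper states immediately afterwards.

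The typo also explains why your simplification machinery would stall if applied literally: with a single perp each term carries the factor $\ebold_k\cdot\ebold_{i+2}$, a genuine dot product that the hypotheses (which constrain only determinants) cannot control, so no amount of index bookkeeping closes the computation. With both perps, testing on $\Wbold=\ebold_k\ebold_k^\T$ turns the $i$-th term into $-\det(\ebold_{i+1},\ebold_k)\det(\ebold_{i+2},\ebold_k)\,\ebold_i\ebold_i^\T$; the two terms with $i\neq k$ vanish because each contains $\det(\ebold_k,\ebold_k)=0$, and the surviving $i=k$ term has coefficient $-(-\delta)(\delta)=\delta^2=1$ with $\delta:=\det(\ebold_1,\ebold_2)$, by the cyclic identity, giving back $\ebold_k\ebold_k^\T$. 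So: correct the statement, carry out that two-line verification, and your argument is complete; as submitted it neither finishes the computation nor detects that the printed identity cannot hold.
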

Actually, for any dimension $d\leq 3$ and any symmetric positive definite $d\times d$ matrix $\Mbold$, there always exists a family $(\ebold_i)_{i\in I}$ of vectors in $\RR^d$ such that $\langle \ebold_i, \Mbold \ebold_j \rangle\leq 0$ for any $i\neq j$. 
Such a family is called $\Mbold$-obtuse \cite{ConSlo}. 
Thus, by taking $(\ebold_0,\ebold_1,\ebold_2)$ as a $\Wbold$-obtuse superbase of $\mathbb{Z}^2$ (i.e.\ a basis of $\mathbb{Z}^2$ with $|\mathrm{det}(\ebold_0,\ebold_1,\ebold_2)|=1$ such that $\ebold_0+\ebold_1+\ebold_2=0$), a stencil $V(\xbold):=\{\ebold_0,\ebold_1,\ebold_2,-\ebold_0,-\ebold_1,\ebold_2\}$ can be used to write explicitly the coefficients $\gamma_\xbold$ in \eqref{eq: E_h mirebeau} for $0\leq i\leq 2$ as done in \cite[Equation (11)]{Mirebeau}:
\[
\gamma_\xbold(\pm \ebold_i) 
:= 
-\frac{1}{2} \langle \ebold_{i+1}^\perp,\,\Wbold  \ebold_{i+2}^\perp\rangle.
\]
The resulting stencil is shown to be independent of the choice of the superbase \cite[Lemma 11]{Mirebeau} and it is orientated along the preferred diffusion direction given by $\Wbold$. 
Also, the AD-LBR scheme is sparse, i.e.\ it has a limited support of 6 points for two-dimensional images.


In the AD-LBR discretisation, an important role is played by the anisotropic ratio $\kappa\in[1,\infty)$, which measures the geometrical shape of the ellipse associated to the eigen-decomposition of the anisotropic matrix $\Wbold$: the closer $\kappa$ is to 1, the more $\Wbold$ is similar to the identity matrix $\Ibold$.
The computation of the stencil has a logarithmic cost in the anisotropy ratio $\kappa$ of the diffusion tensor, making AD-LBR appealing for applications.
Also, by fixing a direction $\theta$ all over the domain $\Omega$ for $\Wbold$, the anisotropy ratio $\kappa$ can be related to the eigenvalues of $\Wbold$; see \cite[Equation 60]{Mirebeau}. For instance, if $\Wbold$ has eigenvalues $\varepsilon$ and $1$ with
$0<\varepsilon\ll 1$, then $\varepsilon=1/\kappa^2$.
%
%
For completeness, we report in Table \ref{tab: stencil mirebeau} the AD-LBR stencil for different choices of $\varepsilon$ and a fixed angle $\theta=2\pi/3$ that characterises the 
second eigenvector $\ebold_2=(\cos \theta, \sin \theta)^\top$ of $\Wbold$; cf.\ also \cite[Table 1,Table 2]{Mirebeau}.

\begin{table}[!htb]\footnotesize
\centering
\caption{AD-LBR stencil for the discretisation of $\div(\Wbold\grad (\blank))$: Different choices of $\varepsilon$ are presented with fixed angle $\theta=2\pi/3$ for the first eigenvector. In bold we denote the $(i,j)$ entry. As expected, we observe that in the case of strong anisotropy, the stencil becomes aligned in $\theta$ direction.}
\label{tab: stencil mirebeau}
\noindent\begin{tabularx}{\textwidth}{XXXX}
\toprule
\multicolumn{1}{c}{$\varepsilon=1$ ($\kappa=1$, $\Wbold=\Ibold$)}
& 
\multicolumn{1}{c}{$\varepsilon=0.5$ ($\kappa=\sqrt{2}$)}
& 
\multicolumn{1}{c}{$\varepsilon=0.1$ ($\kappa=\sqrt{10}$)}
&
\multicolumn{1}{c}{$\varepsilon=0.02$ ($\kappa=\sqrt{50}$)}
\tabularnewline  
\midrule
\[
\begin{aligned}
0.00 & &1.00 & &0.00\\
1.00 & &\mathbf{-4.00} & &1.00\\
0.00 & &1.00 & &0.00
\end{aligned}
\]
&
\[
\begin{aligned}
0.00    & &0.41          & &0.22\\
0.66 & &\mathbf{-2.57} & &0.66\\
0.22 & &0.41          & &0.00
\end{aligned}
\]
&
\[
\begin{aligned}
0.00 & &0.26 & &0.26\\
0.26 & &\mathbf{-1.16} & &0.26\\
0.26 & &0.26 & &0.00
\end{aligned}
\]
&
\[
\begin{aligned}
0.00 & &0.11 & &0.16\\
0.01 & &\mathbf{-0.55} & &0.01\\
0.16 & &0.11 & &0.00
\end{aligned}
\]
\tabularnewline  
\bottomrule
\end{tabularx}
\end{table}


\begin{remark}
\label{rem: AD-LBR column sum}
By construction, the AD-LBR space discretisation matrix $\Abold$ has 0 column sum and non-negative off-diagonal entries.
Moreover, our numerical experiments give strong evidence that $\Abold$ is also 
irreducible\footnote{We applied the Tarjan's algorithm finding the strongly connected components of a directed graph \cite{Tarjan1972}. Code is freely available at MATLAB central: \url{mathworks.com/matlabcentral/fileexchange/50707}.}. However, a formal proof of the irreducibility of $\Abold$ is left for future research. 
\end{remark}


\subsection{Exact time discretisation}
\label{sec: exponential integrators}
In this section we see how it is possible to solve the dynamical system~\eqref{eq:ODE}, through a highly accurate approximation of the exact solution
\begin{equation}
\ubold(T)=\exp(T\Abold)\fbold.
\end{equation}
First of all, we notice that it is not necessary to compute the large and dense matrix 
$\exp(T\Abold)$ explicitly. It is sufficient to compute only its
action on the initial solution $\fbold$. 
Polynomial methods (see, for instance, \cite{Saad92,Caliari16,AlMohy11}, approximate the action of the exponential by a polynomial of a certain degree applied to the initial vector. 
They do not require to solve a linear system of equations:
usually they scale the matrix and approximate the solution by
an iterative procedure like
\begin{equation*}
\ubold^{k+1}=p_{m_k}(\alpha_k\tau\Abold)\ubold^k,\quad k=0,1,\ldots,K-1,
\quad \ubold^0=\fbold
\end{equation*}
where $p_{m_k}$ is a polynomial of degree $m_k$ which approximates
the exponential function and
\begin{equation*}
\sum_{k=0}^{K-1}\alpha_k=1.
\end{equation*}
After the last iteration, $\ubold^K\approx \ubold(T)$. 
Such an iterative scheme is usually needed in order to achieve a sufficiently accurate result. For Krylov methods (like \cite{Saad92}), it is also necessary in order to keep the computational cost as low as possible, since the cost to produce $p_{m_k}$ requires $\Ocal(m_k^2)$ scalar products with vectors of the size of $\fbold$. 
Among the polynomial methods, the truncated Taylor series \cite{AlMohy11} and the interpolation in the Newton form at Leja points \cite{Caliari16} are able to bound the relative backward error.  
This means that they construct an approximation in $K$ iterations with time step size $\tau=T/K$:
\begin{equation*}
\ubold^{k+1}=p_m(\tau\Abold) \ubold^k,\quad k=0,1,\ldots,K-1
\end{equation*}
such that
\begin{equation*}
\left(p_m(\tau\Abold)\right)^K \fbold
=
\exp(T\Abold+\delta(T\Abold))\fbold,\quad
\text{with $\|\delta (T\Abold)\|\le \mathrm{tol}\cdot\|T \Abold\|$}.
\end{equation*}
The tolerance $\mathrm{tol}$ can be chosen as small as desidered. The typical value
for double precision arithmetic is $2^{-53}$ and in this sense the time
integration is said ``exact''. 
The polynomial $p_m(z)$ is either the truncated Taylor series of $e^z$ about
a point which depends on the spectrum of the matrix or the interpolation polynomial of $e^z$ at real Leja points on an interval related to the spectrum of the matrix.

The choice of the parameters $K$ and $m$ can be done by simply considering the 1-norm
of $T\Abold$, or estimates of $\|(T\Abold)^q\|_1^{1/q}$ for small
values of $q$, or, only for interpolation at Leja points, estimates on the $\epsilon$-pseudospectra of $T\Abold$. From the implementation point of
view, both algorithms simply require one matrix--vector product and one vector update at each degree elevation, and therefore the cost is $\Ocal(m)$. When the spectrum of $T\Abold$ has a skinny shape, either horizontal or vertical, usually interpolation at Leja points performs better (see~\cite{Caliari16}).

Since this method can be configured to be exact in time up to machine precision,
it is basically possible to reach the final time $T$ in a single time step of size
$\tau=T$. However, we prefer to use multiple time steps, since the steady state 
$T$ is in general not known a priori.
On the other hand, there is no restriction on the time step $\tau$ and it is
therefore possible to implement any desired strategy for steady state detection (based,
for instance, on the comparison of the solutions at two successive time steps).
In particular, also variable step size $\tau_k$ implementations are possible without
any restriction given by the stability or the computational cost.

%
The approximation of the action of the matrix exponential to a vector can
be used also in the so called \emph{exponential integrators}
(we refer to the survey paper~\cite{HochOst10})
when solving general (non-linear) stiff ordinary differential equations.

\subsection{Coverage through the discrete osmosis theory}  \label{sec:scale_space_properties}

A natural question in this context is whether the matrix $\Pbold:=\exp(\tau \Abold)$ satisfies the properties of Theorem \ref{theo:vogel} for suitable matrices $\Abold$. We answer this question with the following proposition, for which a simple lemma is required.
\begin{lemma}   \label{lemma:sum}
Let $\Cbold=(c_{i,j}), \Dbold=(d_{i,j})\in\RR^{S\times S}$ be matrices with column sums $c$ and $d$, respectively. Then the matrix $\Bbold:=\Cbold\Dbold$ has column sum $cd$.
\end{lemma}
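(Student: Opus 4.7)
The plan is to prove this via a direct computation using the row-vector characterisation of column sums. Let $\mathbf{1}\in\RR^S$ be the all-ones column vector. The condition that $\Cbold$ has column sum $c$ is equivalent to the identity $\mathbf{1}^\top \Cbold = c\,\mathbf{1}^\top$, and similarly $\mathbf{1}^\top \Dbold = d\,\mathbf{1}^\top$ characterises the column sum of $\Dbold$.

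First I would rewrite the claim in this vector form. Then the computation is a one-line composition:
\begin{equation*}
\mathbf{1}^\top \Bbold \;=\; \mathbf{1}^\top (\Cbold \Dbold) \;=\; (\mathbf{1}^\top \Cbold)\,\Dbold \;=\; c\,\mathbf{1}^\top \Dbold \;=\; c\,(d\,\mathbf{1}^\top) \;=\; (cd)\,\mathbf{1}^\top,
\end{equation*}
which says exactly that every column of $\Bbold$ sums to $cd$.

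As a sanity check I would also verify the statement entrywise: by the definition of matrix multiplication, the $j$-th column sum of $\Bbold$ is
\begin{equation*}
\sum_{i=1}^S b_{i,j} \;=\; \sum_{i=1}^S \sum_{k=1}^S c_{i,k}\,d_{k,j} \;=\; \sum_{k=1}^S \Bigl(\sum_{i=1}^S c_{i,k}\Bigr) d_{k,j} \;=\; c \sum_{k=1}^S d_{k,j} \;=\; cd,
\end{equation*}
where the inner sum equals $c$ because it is the $k$-th column sum of $\Cbold$, and the remaining sum equals $d$ for the same reason applied to $\Dbold$.

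There is no real obstacle here: the lemma is just the associativity of matrix multiplication combined with the fact that ``having constant column sum $s$'' is equivalent to being a left-eigenvector of $\mathbf{1}^\top$ with eigenvalue $s$. The two-line display above is the entire proof, so I would present the vector-form argument as the main line and omit the entrywise version unless the paper's style favours explicit index manipulations.
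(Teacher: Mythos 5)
Your proof is correct and essentially the same as the paper's: the entrywise computation you include as a ``sanity check'' is verbatim the paper's argument, and your vector formulation $\mathbf{1}^\top\Cbold = c\,\mathbf{1}^\top$ is just a compact repackaging of the same swap of summation order. No gaps.
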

\begin{proof}
For every $i,j=1,\ldots,N$, we write each element  $b_{i,j}$ in terms of the elements of $\Cbold$ and $\Dbold$. For every column $j$ we have:
\[
\sum_{i=1}^N b_{i,j} 
= 
\sum_{i=1}^N\sum_{k=1}^N c_{i,k}d_{k,j} 
= 
\sum_{k=1}^N\Bigl( d_{k,j} \sum_{i=1}^N c_{i,k}\Bigr) = \sum_{k=1}^N \Bigl( d_{k,j}\cdot c \Bigr) = c \sum_{k=1}^N d_{k,j} = c d.\qedhere
\]
\end{proof}

\begin{proposition}  \label{theo:scale-space_exp}
Let $\Abold$ be an irreducible matrix, with column sum $0$ and non-negative off-diagonal entries. 
Then the (non-symmetric) matrix $\Pbold:=\exp(\tau \Abold)$ is an irreducible positive matrix with column sum $1$.
\end{proposition}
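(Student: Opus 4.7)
The plan is to establish the three claims (positive entries, column sum one, irreducibility) separately. The column sum is the easiest; positivity is the main technical step and follows from a standard shift argument combined with the classical characterisation of irreducible non-negative matrices; irreducibility is then a free consequence of entry-wise positivity.

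First I would handle the column sum. Writing the exponential as the limit $\exp(\tau\Abold) = \lim_{n\to\infty}(\Ibold+(\tau/n)\Abold)^n$, the matrix $\Ibold+(\tau/n)\Abold$ has column sum $1+(\tau/n)\cdot 0 = 1$. Applying Lemma \ref{lemma:sum} inductively to $n$ factors shows that $(\Ibold+(\tau/n)\Abold)^n$ has column sum $1$, and taking the limit (entry-wise) gives $\mathbf{1}^\top\Pbold = \mathbf{1}^\top$. Alternatively, one can argue directly from the power series: since $\mathbf{1}^\top\Abold=0$, one has $\mathbf{1}^\top\Abold^k=0$ for every $k\geq 1$, so $\mathbf{1}^\top\exp(\tau\Abold)=\mathbf{1}^\top$.

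For the positivity step I would use a shift. Let $\alpha := -\min_i a_{ii}\geq 0$ (non-negative because the column-sum and off-diagonal sign conditions force $a_{ii}\leq 0$), and set $\Bbold:=\Abold+\alpha\Ibold$. Then $\Bbold$ has non-negative entries and the same off-diagonal pattern as $\Abold$, hence is irreducible. From the commutation $[\Abold,\alpha\Ibold]=0$ we get
\begin{equation*}
\Pbold \;=\; \exp(\tau\Abold) \;=\; e^{-\tau\alpha}\,\exp(\tau\Bbold)
\;=\; e^{-\tau\alpha}\sum_{k=0}^{\infty}\frac{\tau^k}{k!}\Bbold^k.
\end{equation*}
Every term of this power series is entry-wise non-negative, so $\exp(\tau\Bbold)$ is non-negative and in particular bounded below by the truncated sum $\sum_{k=0}^{S-1}(\tau^k/k!)\Bbold^k$. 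Because $\Bbold$ is non-negative and irreducible, a standard result (equivalent to the irreducibility criterion $(\Ibold+\Bbold)^{S-1}>0$) ensures that for every index pair $(i,j)$ there exists some $0\leq k\leq S-1$ with $(\Bbold^k)_{i,j}>0$; since $\tau>0$ and all other contributions are non-negative, this forces $(\exp(\tau\Bbold))_{i,j}>0$. Multiplying by the scalar $e^{-\tau\alpha}>0$ gives $\Pbold>0$ entry-wise.

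Finally, irreducibility of $\Pbold$ is automatic: any entry-wise strictly positive square matrix is irreducible, because its associated directed graph is complete. The main obstacle is really the positivity step, and specifically the use of the $(\Ibold+\Bbold)^{S-1}>0$ fact to transfer irreducibility of $\Bbold$ into strict positivity of the truncated exponential sum; once this is in place, the scaling $e^{-\tau\alpha}$ and the column-sum calculation conclude the proof.
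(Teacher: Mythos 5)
Your proposal is correct and follows essentially the same route as the paper: the column sum via the power series and Lemma~\ref{lemma:sum}, and positivity via a diagonal shift $\Abold=\alpha\Ibold+(\Abold-\alpha\Ibold)$ that makes the second summand a non-negative irreducible matrix, followed by factoring the exponential and extracting a strictly positive power for each entry. The only cosmetic differences are the sign convention of the shift and your explicit appeal to the classical criterion $(\Ibold+\Bbold)^{S-1}>0$ to bound the relevant power by $S-1$, where the paper argues the existence of such a power by contradiction without a bound.
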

\begin{proof}
We firstly show that $\Pbold$ has column sum $1$. To this end, we use the expansion
\begin{equation}  \label{eq:exponential:expansion}
\Pbold=\exp(\tau\Abold) = \sum_{k=0}^\infty \frac{\tau^k\Abold^k}{k!} = \Ibold + \sum_{k=1}^\infty \frac{\tau^k\Abold^k}{k!}.
\end{equation}

By hypothesis, $\Abold$ has column sum $0$.
Then the column sum of $\Abold^k$ is $0$ for every $k\geq 1$ by Lemma~\ref{lemma:sum}. Thus, the matrix $\Pbold=\exp(\tau\Abold)$ has column sum $1$.

In order to show that all elements of $\Pbold$ are positive, we rewrite $\Abold$ as 
\[
\Abold = \Dbold + \Nbold = \alpha \Ibold + (\Dbold - \alpha \Ibold + \Nbold)
\]
where $\Dbold=\diag(\Abold)$, $\Ibold$ is the identity matrix, $\Nbold=\Abold-\diag(\Abold)$ is a non-negative matrix and $\alpha:=\min_{i} a_{i,i} - \gamma$, for any $\gamma>0$. 
We have that $\Pbold$ can be expressed as the product
\[
\Pbold=\exp(\tau \Abold) = \exp(\tau \alpha \Ibold) \exp\left(\tau (\Dbold - \alpha \Ibold + \Nbold)\right) 
\]
where $\tau (\Dbold - \alpha \Ibold + \Nbold)$ is a non-negative matrix with positive diagonal and which is also irreducible. 
In fact, being that false, there would exist an extra-diagonal 
element in position $(i,j)$ such that $(\Dbold-\alpha\Ibold+\Nbold)_{i,j}^k=0$ for all $k>0$. 
But then, $(\Abold)_{ij}^k=\left(\alpha\Ibold+(\Dbold-\alpha\Ibold+\Nbold)\right)_{i,j}^k=0$ for all $k>0$, meaning that $\Abold$ is reducible, which is false by hypothesis. 
So, for any pair $(i,j)$ there exists a $k$ such that
$\left(\tau(\Dbold-\alpha\Ibold+\Nbold)\right)_{i,j}^k>0$. 
Since all the powers
of $\tau(\Dbold-\alpha\Ibold+\Nbold)$ appear in the series which defines
$\exp(\tau(\Dbold-\alpha\Ibold+\Nbold))$, we conclude that all its entries are
positive. Finally, $\exp(\tau \Abold)$ is obtained by scaling with positive
scalars the rows of $\exp(\tau(\Dbold-\alpha\Ibold+\Nbold))$. 
Therefore,
$\exp(\tau \Abold)$ is a positive (and thus irreducible) matrix.\qedhere
\end{proof}


Recalling Remark \ref{rem: AD-LBR column sum}, Proposition \ref{theo:scale-space_exp} shows that the AD-LBR space discretisation in combination with our ``exact''  polynomial 
time discretisation leads to a fully discrete anisotropic osmosis scheme that satisfies all assumptions of the discrete theory (subject to the missing formal irreducibility proof).


\section{Numerical results}\label{sec: applications}

In this section we present several numerical examples showing the application of the isotropic and anisotropic osmosis model to solve the shadow removal problem in synthetic and real-world images. We apply the anisotropic model \eqref{eq: general shadow removal} to images affected by almost constant shadows, in order to perform jointly the shadow removal and the inpainting procedure on the shadow edges.
Since in general the ground truth is not available for this problem, the quality of the reconstruction of the anisotropic model in comparison the isotropic approach is assessed by visual inspection.

\paragraph{On the thickness of the shadow boundary.} In the following numerical experiments we will assume for simplicity that a rough segmentation of the shadow boundary is provided beforehand. For natural real images, this may be a quite challenging task since, due to the possible presence of noise, blur and/or compression artefacts, such region may be not sharp and presents a blurred transition zone from the outside to the inside area of the shadow. As a consequence, standard segmentation methods based, for instance, on edge detection methods may fail. In fact, the task of shadow segmentation has been addressed on its own regard in previous works where brightness-based \cite{BabaSiggraph2003} or clustering \cite{ShadowReview2013} methods have been applied. However, in many practical situations, the shadow is often roughly detected manually by the user using a brush including pixels both from the inside and the outside of the shadowed area. Such region corresponds to $\Omega_{\text{sb}}$ in \eqref{eq:field_matrix}. 

In our experiments we have observed that if this region is chosen to be too small (i.e.\ smaller than the whole transition area between non-shadowed and shadowed area), then the shadow removal result is fairly poor. On the contrary, in general, selecting a thicker shadow boundary produces better results. Thick boundaries favour the use of the anisotropic model \eqref{eq: general shadow removal}-\eqref{eq:field_matrix} over the isotropic one \eqref{eq: equation weickert for shadow removal}: as we seen above, the action of the homogeneous diffusion smoothing on a large region $\Omega_{\text{sb}}$ is not able to preserve the underlying image structures. 

Figure \ref{fig: thickness of shadow boundary} illustrates these findings. We compare the result obtained applying the isotropic model \eqref{eq: equation weickert for shadow removal} on a real image where the thickness of the shadow boundary is chosen differently. We clearly observe that a thicker $\Omega_{\text{sb}}$ corresponds to a better removal of the shadow for the same large final time $T$.

\begin{figure}[tbhp]
\centering
\begin{subfigure}[t]{0.32\textwidth}\centering
\includegraphics[width=1\textwidth]{\detokenize{./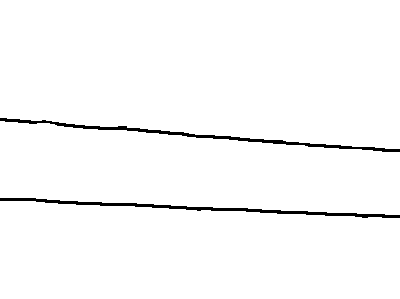}}
\caption{Mask of 3px}
\end{subfigure}
\,
\begin{subfigure}[t]{0.32\textwidth}\centering
\includegraphics[width=1\textwidth]{\detokenize{./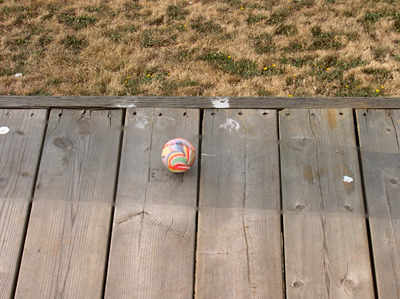}}
\caption{Result}
\end{subfigure}
\,
\begin{subfigure}[t]{0.32\textwidth}\centering
\includegraphics[width=1\textwidth,trim=3.5cm 2.5cm 5.86cm 4.5cm,clip=true]{\detokenize{./images/realshadow/66_classic.png}}
\caption{Result (zoom)}
\end{subfigure}
\\
\begin{subfigure}[t]{0.32\textwidth}\centering
\includegraphics[width=1\textwidth]{\detokenize{./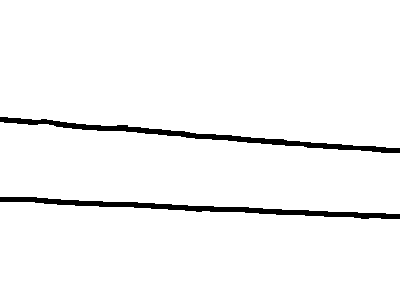}}
\caption{Mask of 5px}
\end{subfigure}
\,
\begin{subfigure}[t]{0.32\textwidth}\centering
\includegraphics[width=1\textwidth]{\detokenize{./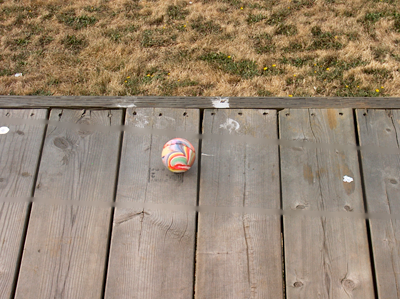}}
\caption{Result}
\end{subfigure}
\,
\begin{subfigure}[t]{0.32\textwidth}\centering
\includegraphics[width=1\textwidth,trim=3.5cm 2.5cm 5.86cm 4.5cm,clip=true]{\detokenize{./images/realshadow/6_classic.png}}
\caption{Result (zoom)}
\end{subfigure}
\caption{Comparison between different thicknesses of the shadow boundary for the solution of the isotropic model \eqref{eq: equation weickert for shadow removal}. Final time $T=100000$.}
\label{fig: thickness of shadow boundary}
\end{figure}

\medskip

We will now show the numerical results obtained when the anisotropic model \eqref{eq: general shadow removal}-\eqref{eq:field_matrix} was applied to a variety of both synthetic and natural images. 

\paragraph{Pseudocode.} 
Algorithm~\ref{alg: shadowremoval} describes the main steps required to solve the joint anisotropic osmosis problem after the estimation of local structures in the given image. For the time integration we used the \texttt{expleja} solver\footnote{Freely available at: \url{bitbucket.org/expleja/expleja}}, which is the companion software of \cite{Caliari16} for computing the action of the exponential matrix on a vector.

\begin{algorithm}[tbhp] 
\caption{Shadow Removal}
\SetAlgoLined\small
\label{alg: shadowremoval}
\SetKwProg{Fn}{Function}{:}{}
\SetKwFunction{computem}{compute\_matrix\_field}
\SetKwFunction{discretizematrix}{discretize\_matrix}
\SetKwFunction{shadowremoval}{shadow\_removal}
\SetKwFunction{expmv}{expleja}
\SetKwFunction{estimatedirection}{estimate\_direction}
\SetKwData{maxitert}{$T$}
\SetKwData{scales}{scales}
\SetKwData{H}{$\texttt{H}$}
\SetKwData{S}{$\texttt{S}$}
\SetKwData{V}{$\texttt{V}$}
\SetKwData{mask}{$\texttt{mask}$}
\SetKwInOut{Input}{Input}
\SetKwInOut{Package}{Package}
\SetKwData{colourchannel}{colour channel}
\BlankLine
\Input{a shadowed image $\fbold$ of dimension $M\times N\times C$ (with $C$ colour channels);\\ 
a \mask with value 1 on shadow boundary and 0 elsewhere;\\
a stack of $\scales = [s_1,\dots,s_S]$;\\
the parameters $\varepsilon$, $\sigma$, $\tau$ and $K$.}
\Package{\texttt{expjeja.m} from \url{bitbucket.org/expleja/expleja}}
\BlankLine
\Fn \shadowremoval {
\BlankLine
$\thetabold$ = \estimatedirection($\fbold$, \mask, \scales, $\sigma$)\tcp*{see Algorithm \ref{alg: tensor voting}}
$\Wbold$ = \computem{$\thetabold$, $\varepsilon$, \mask}
\tcp*{from Equation \eqref{eq:w_field}}
$\ubold^0=\fbold$\tcp*{initialisation}
\BlankLine
\For{$c=1,\dots,C$}{
\BlankLine
$\Abold_c$ = \discretizematrix{$\ubold^0(:,:,c)$, $\Wbold$, \mask}\tcp*{via AD-LBR stencil}
\BlankLine
\For{$k=0,\dots, K - 1$}{
\BlankLine
$\ubold^{k+1}(:,:,c)$ = \expmv{$\tau$, $\Abold_c$, $\ubold^k(:,:,c)$}\;
\BlankLine
}
\BlankLine
}
\BlankLine
}
\Return{$\ubold^{K}$.}
\end{algorithm}

\subsection{Synthetic examples}\label{sec: synthetic examples}
Firts we apply the anisotropic osmosis model to noise-free synthetic images, where the direction of the gradient $\zbold$ is known a priori.
The purpose of this synthetic experiment is to check if our approach is able to effectively remove constant shadows.
In Figure \ref{fig: comparison isotropic-anisotropic1} we show the results obtained for an image with parallel 
greyscale stripes with $\theta=65^\circ$ orientation and for colour concentric circles, whose $\theta$ is chosen to be as the angle drawn with tangent to the circumferences. Both images are corrupted with an almost constant shadow but a transition zone on the shadow borders.
We compare the anisotropic model described in Section \ref{sec:AD-LBR} with the isotropic osmosis model \eqref{eq: equation weickert for shadow removal}, which results in an homogenous diffusion inpainting at the shadow boundary. The time discretisation is performed as described in Section \ref{sec: exponential integrators}. 

In our visual comparison of both methods, we choose the final time $T=10000$ and the time-step $\tau=100$, and we proceeded using Algorithm \ref{alg: shadowremoval}. 
In this experiment we also provide a visual representation of the local orientation angle $\theta$ inside the
shadow boundary $\Omega_\text{sb}$. It becomes obvious that the anisotropic shadow removal
method shows clear advantages at the shadow boundaries, since it does not suffer from blurring artefacts.

\begin{figure}[tbhp]
\centering
\begin{subfigure}[t]{0.225\textwidth}\centering
\includegraphics[width=1\textwidth]{\detokenize{./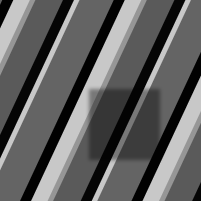}}
\caption*{Shadowed $\fbold$}
\end{subfigure}
\hfill
\begin{subfigure}[t]{0.225\textwidth}\centering
\includegraphics[width=1\textwidth]{\detokenize{./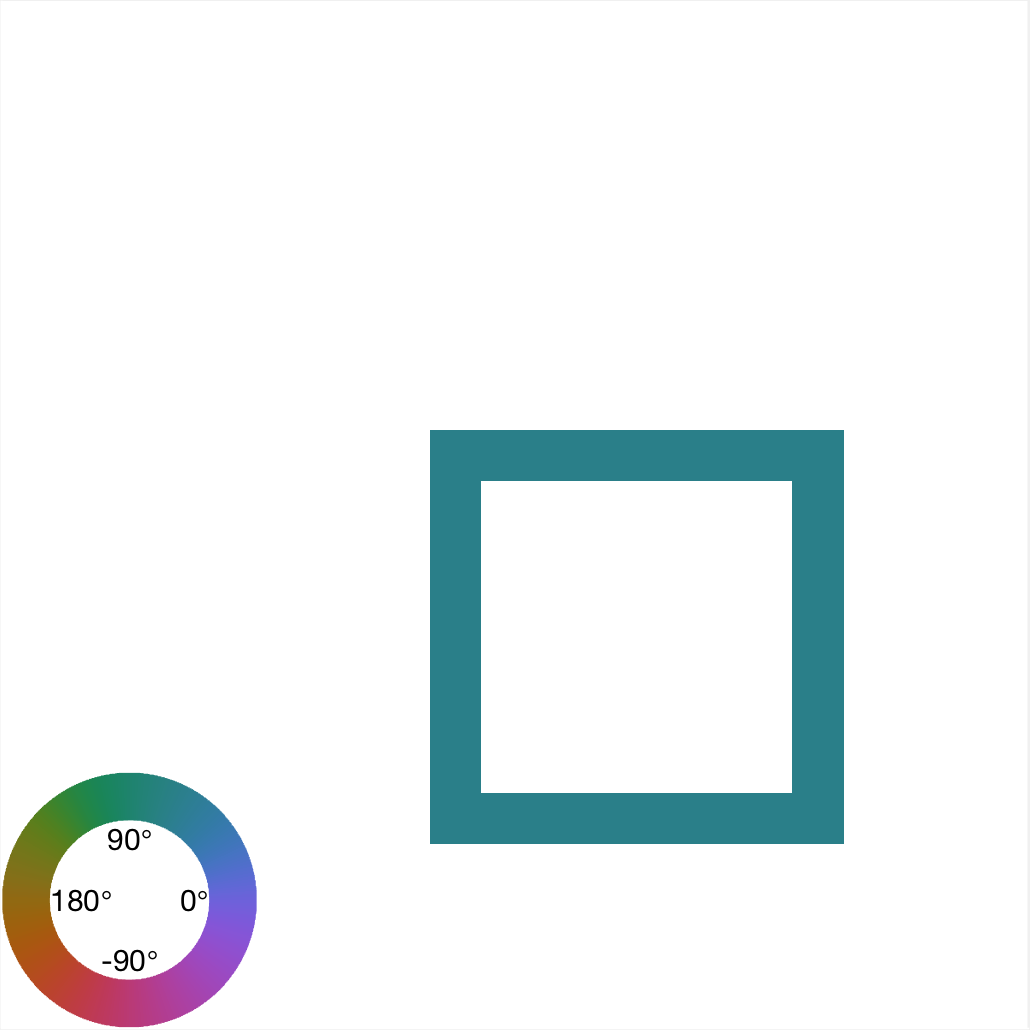}}
\captionsetup{justification=centering}
\caption*{Orientation angle $\thetabold$}
\end{subfigure}
\hfill
\begin{subfigure}[t]{0.225\textwidth}\centering
\includegraphics[width=1\textwidth]{\detokenize{./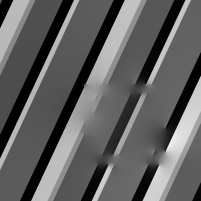}}
\captionsetup{justification=centering}
\caption*{Isotropic osmosis}
\end{subfigure}
\hfill
\begin{subfigure}[t]{0.225\textwidth}\centering
\includegraphics[width=1\textwidth]{\detokenize{./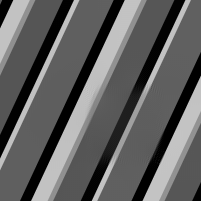}}
\captionsetup{justification=centering}
\caption*{Anisotropic osmosis}
\end{subfigure}
\\
\begin{subfigure}[t]{0.225\textwidth}\centering
\includegraphics[width=1\textwidth]{\detokenize{./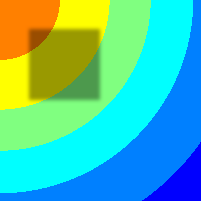}}
\caption*{Shadowed $\fbold$}
\end{subfigure}
\hfill
\begin{subfigure}[t]{0.225\textwidth}\centering
\includegraphics[width=1\textwidth]{\detokenize{./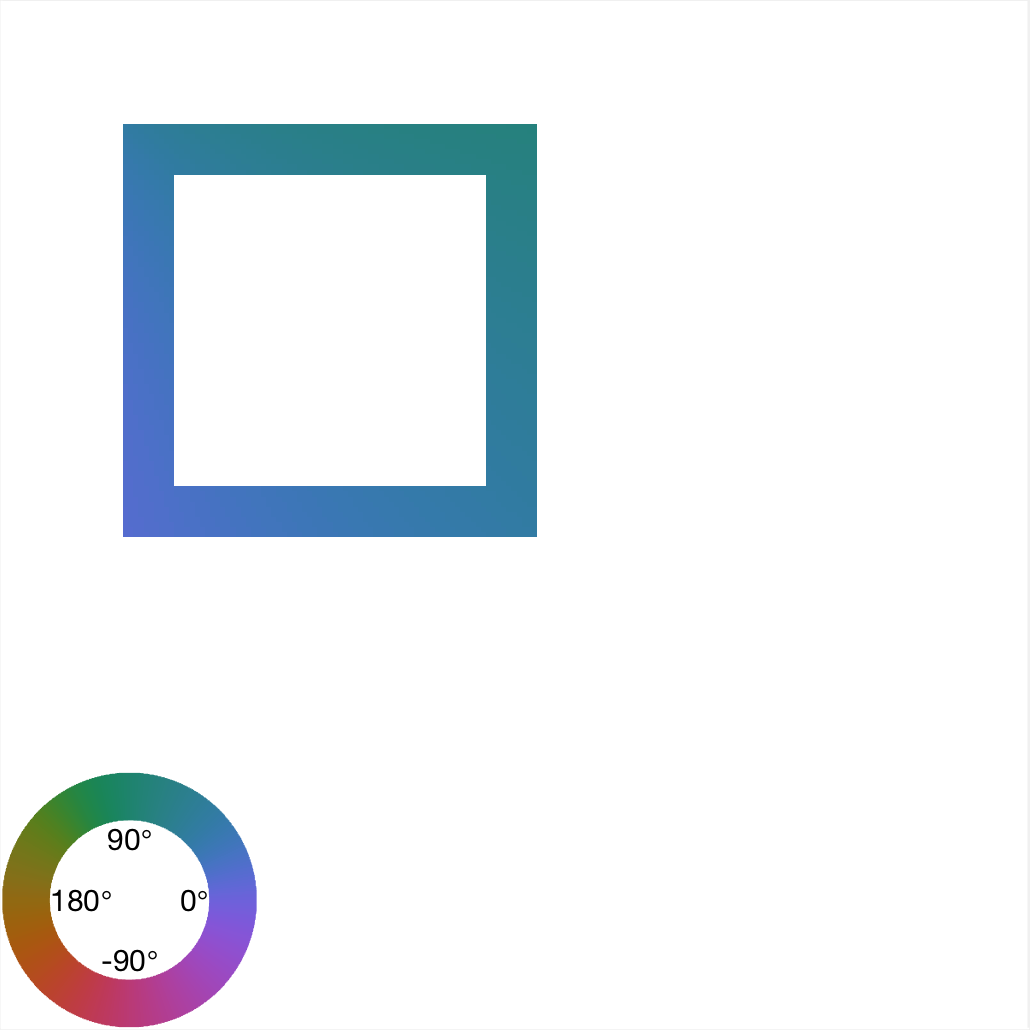}}
\captionsetup{justification=centering}
\caption*{Orientation angle $\thetabold$}
\end{subfigure}
\hfill
\begin{subfigure}[t]{0.225\textwidth}\centering
\includegraphics[width=1\textwidth]{\detokenize{./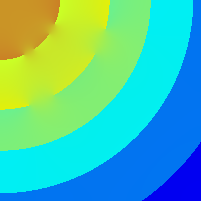}}
\captionsetup{justification=centering}
\caption*{Isotropic osmosis}
\end{subfigure}
\hfill
\begin{subfigure}[t]{0.225\textwidth}\centering
\includegraphics[width=1\textwidth]{\detokenize{./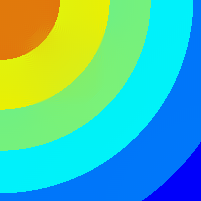}}
\captionsetup{justification=centering}
\caption*{Anisotropic osmosis}
\end{subfigure}
\caption{
Shadow removal via osmosis on synthetic images. 
Comparison between isotropic and aniotropic osmosis. 
Parameters: time step size $\tau=100$, final time $T=10000$, and smaller eigenvalue $\varepsilon=0.05$.}
\label{fig: comparison isotropic-anisotropic1}
\end{figure}

\subsection{Real-world examples}
In order to apply the anisotropic model to natural real-world images, the estimation of the discrete local orientation $\thetabold(\xbold)$ becomes crucial.
Therefore, we show in Section \ref{sec: estimation local direction via tensor voting} the results of the proposed approach discussed in Section \ref{sec: computation of directions}. 
Once the directions are estimated, we present in Section \ref{sec: real examples} the results of the anisotropic osmosis filter on real shadowed images.

\subsubsection{Estimation of the local orientation.}\label{sec: estimation local direction via tensor voting}

Following Section \ref{sec: computation of directions}, here we present the algorithm and the results for estimating the vector field that closes the interrupted lines onto the shadow boundary domain $\Omega_\text{sb}$, via the modified tensor voting framework.

For the tensor voting, we used the MATLAB implementation of \cite{FraAlmRonFloRom06} from the companion software\footnote{Freely available at MATLAB central: \url{mathworks.com/matlabcentral/fileexchange/47398}} of \cite{MagMenFre14}, a literature review on tensor voting.
Also, since tensor voting depends on local neighbourhoods, we use a multi-resolution strategy as is described in Algorithm \ref{alg: tensor voting}.

\begin{algorithm}[tbhp]
\caption{(Multi-scale) Eigen-directions via tensor voting in shadowed image}
\SetAlgoLined\footnotesize\small
\label{alg: tensor voting}
\SetKwProg{Fn}{Function}{:}{}
\SetKwFunction{computev}{compute\_vector\_field}
\SetKwFunction{discretizematrix}{discretize\_matrix}
\SetKwFunction{shadowremoval}{expmv}
\SetKwFunction{vote}{vote}
\SetKwFunction{encode}{encode}
\SetKwFunction{xytoij}{xy2ij}
\SetKwFunction{cos}{cos}
\SetKwFunction{sin}{sin}
\SetKwFunction{eigentensor}{eigen\_to\_tensor}
\SetKwFunction{tensoreigen}{tensor\_to\_eigen}
\SetKwFunction{estimatedirections}{estimate\_directions}
\SetKwData{maxitert}{$T$}
\SetKwData{H}{$\texttt{H}$}
\SetKwData{S}{$\texttt{S}$}
\SetKwData{V}{$\texttt{V}$}
\SetKwData{mask}{$\texttt{mask}$}
\SetKwInOut{Input}{Input}
\SetKwInOut{Package}{Package}
\SetKwData{colourchannel}{colour channel}
\SetKwData{salloc}{saliency\_loc}
\SetKwData{balloc}{ballness\_loc}
\SetKwData{oriloc}{orientation\_loc}
\SetKwData{saliency}{saliency}
\SetKwData{ballness}{ballness}
\SetKwData{orientation}{orientation}
\SetKwData{scales}{scales}
\BlankLine
\Input{a shadowed image $\fbold$ of dimension $M\times N\times C$ (with $C$ colour channels); 
\\
a \mask with value 1 on shadow boundary and 0 elsewhere;
\\
a stack of $\scales = [s_1,\dots,s_S]$;\\
the parameter $\sigma>0$.
}
\Package{\encode and \vote from \url{mathworks.com/matlabcentral/fileexchange/47398} modified to return \oriloc and \orientation in $(x,y)$ coordinates}
\BlankLine
\Fn \estimatedirections {
\BlankLine
TVF = zeros(size($\fbold$,1),size($\fbold$,2),2,2)\;
\BlankLine
\For{$c=1,\dots,C$}{
\BlankLine
[ \salloc,\,\balloc,\,\oriloc] = \encode($K_\sigma \ast \fbold(:,:,c)$)\;
\BlankLine
\tcp{zeroing and randomizing the data on the shadow edges $\Omega_\text{sb}$}
\salloc = \salloc.*\mask\;
\oriloc = \oriloc.*\mask + $2\pi$.* rand(size($\fbold(:,:,c)$)).*(1-\mask)\;  
\BlankLine
\ForEach{$k$ in \scales}{
	   \BlankLine
       [ \saliency,\,\ballness,\,\orientation] = \vote( \salloc,\,\oriloc,\, $s_k$ )\;
	   $\lambdabold_1$ = \saliency+\ballness\;
	   $\lambdabold_2$ = \ballness\;	
       $\ebold_1$ = ( cos(\orientation), sin(\orientation))\;
	   $\ebold_2$ = (-sin(\orientation), cos(\orientation))\;
        
       TVF = TVF + \eigentensor( $\ebold_1$, $\ebold_2$, $\lambdabold_1$, $\lambdabold_2$ )\;
\BlankLine
}
\BlankLine
}
\BlankLine
[ $\ebold_1$, $\ebold_2$, $\lambdabold_1$, $\lambdabold_2$ ] = \tensoreigen( TVF )\;
$\thetabold$ = \xytoij($\ebold_2$)\tcp*{return the local orientation in $(i,j)$ coordinates}
\BlankLine
}
\Return{$\thetabold$}
\end{algorithm}

In Figure \ref{fig: STVvsTVF} we compare the local direction estimation by means of the structure tensor and the tensor voting approach applied on the shadowed image in Figure \ref{fig: STVvsTVF1}. We plot the magnitude of the leading eigenvalue of the structure tensor in Figure \ref{fig: STVvsTVF2} and of the one computed for the tensor voting Algorithm \ref{alg: tensor voting} in Figure  \ref{fig: STVvsTVF3}. These images clearly show that the estimation via tensor voting is less sensitive to the false edges introduced by the shadow boundaries. 
This is reflected in the plot of the leading directions, too: The directions computed via the structure tensor in Figure \ref{fig: STVvsTVF4} are visibly affected by the light jumps while the ones computed with tensor voting in Figure \ref{fig: STVvsTVF5} can still  connect the structures from outside to inside the shadow with no additional edges.

\begin{figure}[tbhp]
\begin{subfigure}[t]{0.193\textwidth}
\includegraphics[width=1\textwidth]{./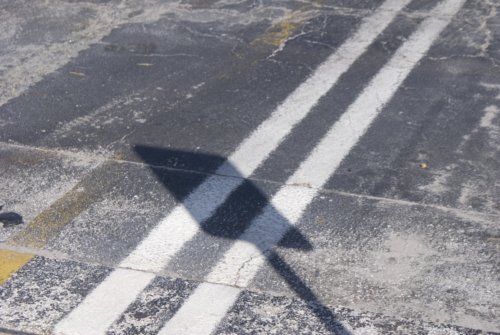}
\caption{Input $\fbold$}
\label{fig: STVvsTVF1}
\end{subfigure}
\hfill
\begin{subfigure}[t]{0.193\textwidth}
\includegraphics[width=1\textwidth]{./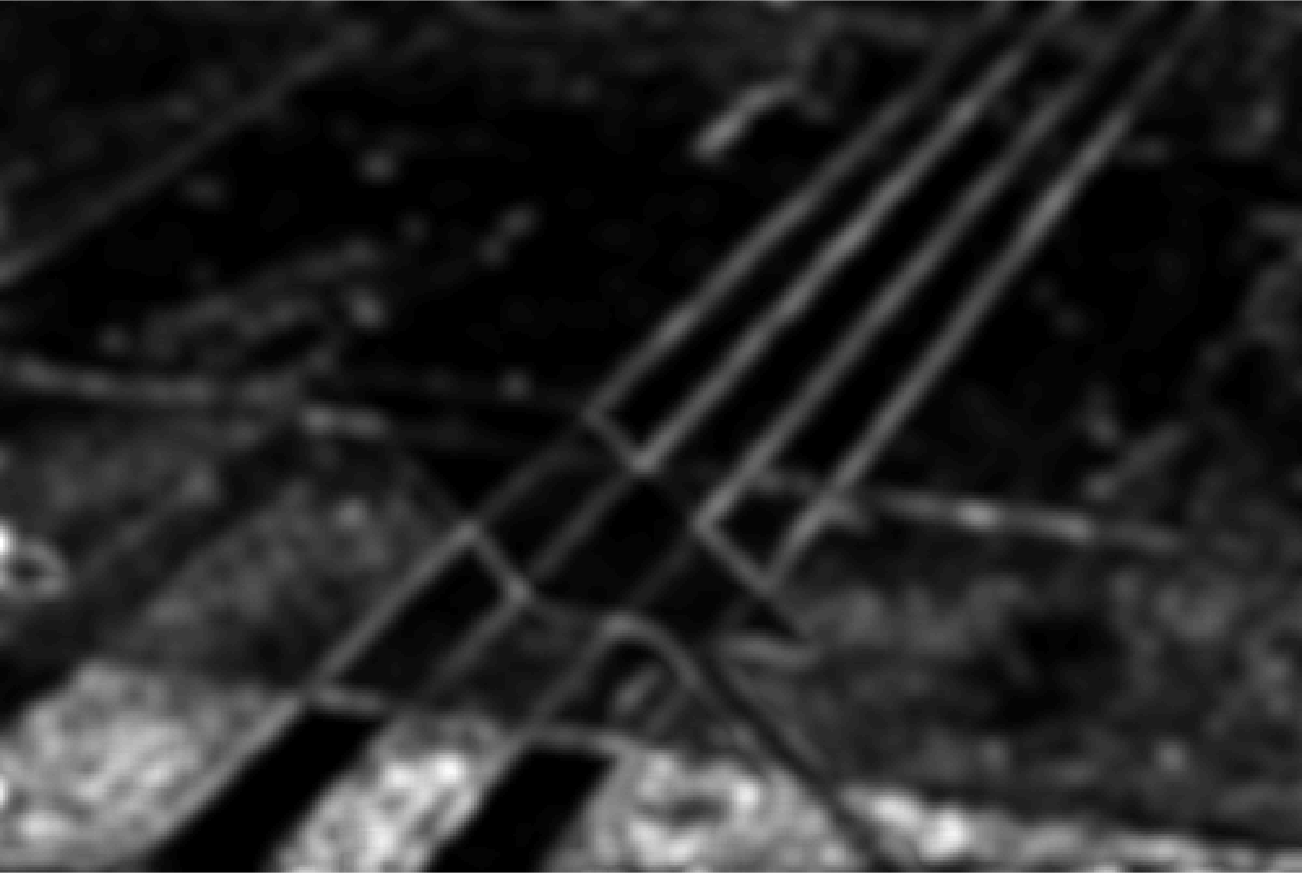}
\caption{STF, $\lambdabold_1$}
\label{fig: STVvsTVF2}
\end{subfigure}
\hfill
\begin{subfigure}[t]{0.193\textwidth}
\includegraphics[width=1\textwidth]{./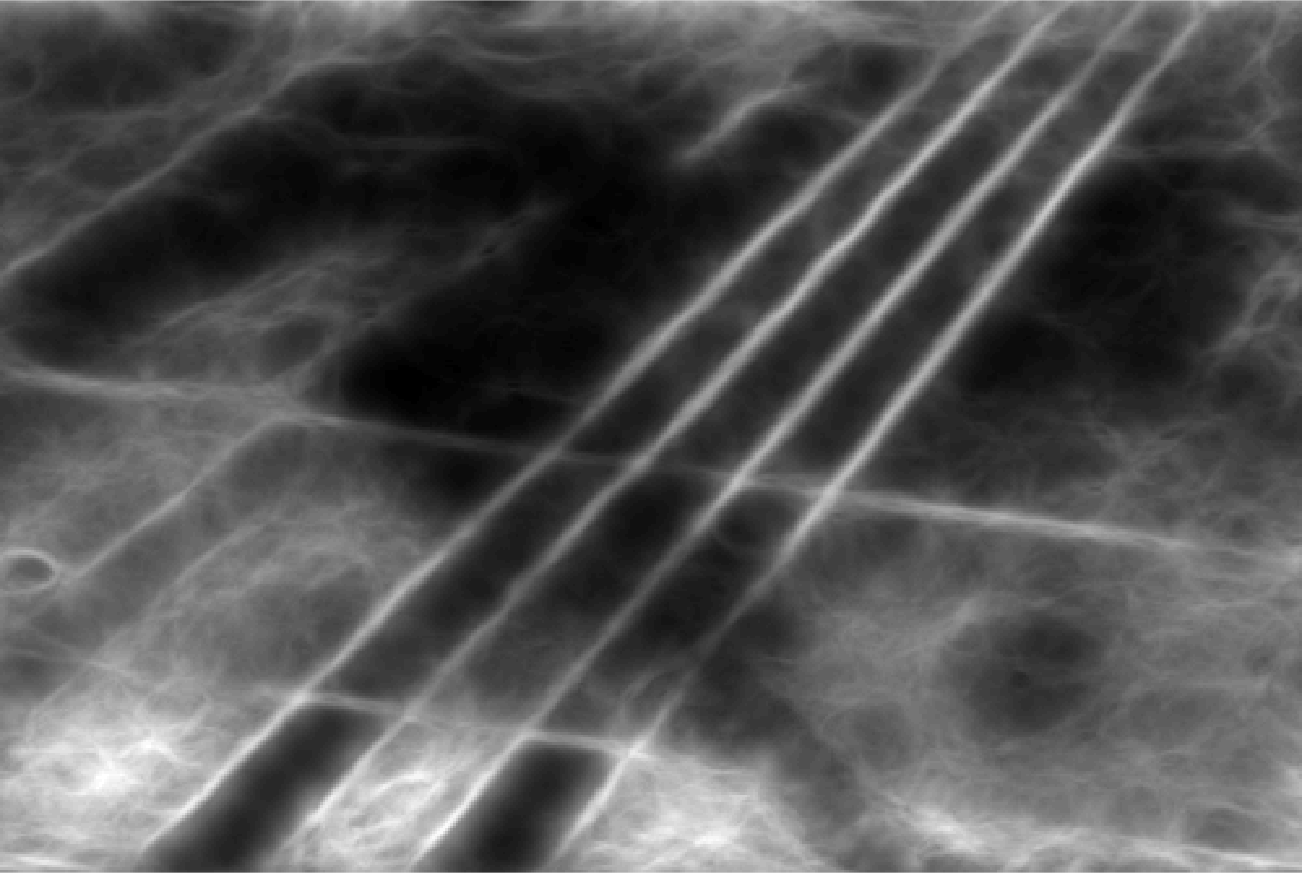}
\caption{TVF, $\lambdabold_1$}
\label{fig: STVvsTVF3}
\end{subfigure}
\hfill
\begin{subfigure}[t]{0.193\textwidth}
\includegraphics[width=1\textwidth]{./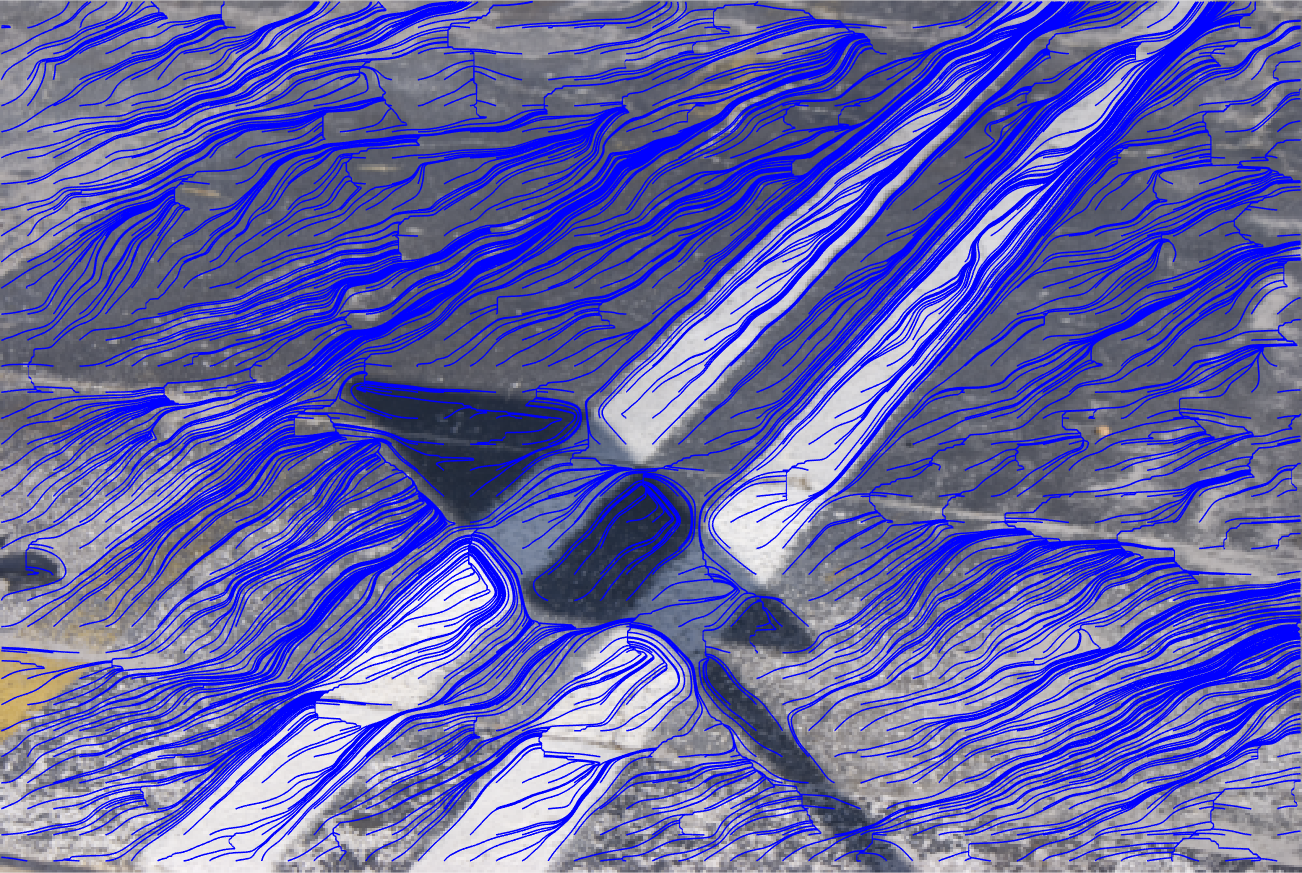}
\caption{STF, $\ebold_1$}
\label{fig: STVvsTVF4}
\end{subfigure}
\hfill\begin{subfigure}[t]{0.193\textwidth}
\includegraphics[width=1\textwidth]{./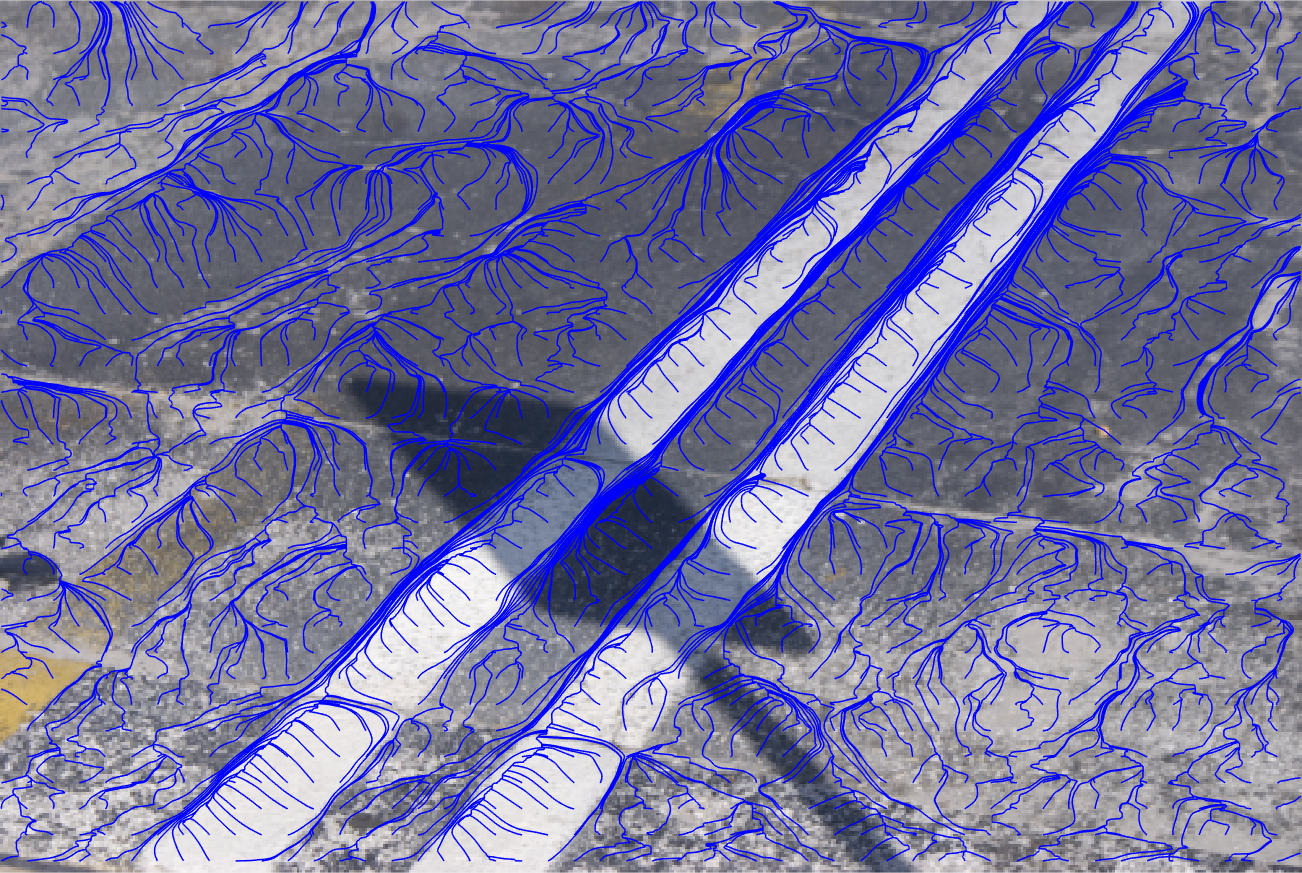}
\caption{TVF, $\ebold_1$}
\label{fig: STVvsTVF5}
\end{subfigure}
\caption{Comparison: structure tensor framework (STF) with $(\sigma,\rho)=(0.5,4)$ versus tensor voting framework (TVF) with multi-scales $(5,10,15)$ and $\sigma=0.5$. We plot the main direction $\ebold_1$ and its associated eigenvalue $\lambdabold_1$.}
\label{fig: STVvsTVF}
\end{figure}

\subsubsection{Results on real images.}\label{sec: real examples}
We now combine the proposed tensor voting framework with the anisotropic osmosis model to remove constant shadows by means of an anisotropic drift-diffusion model.
In the following experiments we use the final time $T=100000$ and the time step size $\tau=1000$. 
In practice the shadow removal is accomplished almost completely already for $t\ll T$. However, for a better approximation of the steady state, we use the final time $T$ for comparison.

In Figure \ref{fig: bamboo zoom} we show a zoom of the results from Figure \ref{fig:anisotropic_shadow}\footnote{Figure \ref{fig: full bamboo} (zoomed in Figure \ref{fig: bamboo}) courtesy of R. D. Kongskov.}, presented as motivation for this work.
Note that in this case the shadow is artificially added as a multiplicative rescaling factor $c\in(0,1)$. The estimation of the direction on the shadow boundary is computed via the proposed Algorithm \ref{alg: tensor voting}.
\begin{figure}[tbhp]
\centering
\begin{subfigure}[t]{0.225\textwidth}\centering
\includegraphics[width=1\textwidth,trim=2.5cm 3.5cm 3.5cm 2.5cm,clip=true]{\detokenize{./images/realshadow/44.png}}
\caption{Shadowed $\fbold$}
\label{fig: bamboo}
\end{subfigure}
\hfill
\begin{subfigure}[t]{0.225\textwidth}\centering
\includegraphics[width=1\textwidth,trim=2.5cm 3.5cm 3.5cm 2.5cm,clip=true]{\detokenize{./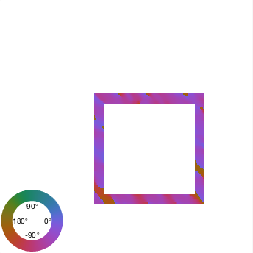}}
\caption*{Orientation angle $\thetabold$}
\end{subfigure}
\hfill
\begin{subfigure}[t]{0.225\textwidth}\centering
\includegraphics[width=1\textwidth,trim=2.5cm 3.5cm 3.5cm 2.5cm,clip=true]{\detokenize{./images/realshadow/44_classic.png}}
\caption*{Isotropic}
\end{subfigure}
\hfill
\begin{subfigure}[t]{0.225\textwidth}\centering
\includegraphics[width=1\textwidth,trim=2.5cm 3.5cm 3.5cm 2.5cm,clip=true]{\detokenize{./images/realshadow/44_mirebeau.png}}
\caption*{Anisotropic 
}
\end{subfigure}
\caption{Zoom of the results for the shadowed image in Figure \ref{fig:anisotropic_shadow}.}
\label{fig: bamboo zoom}
\end{figure}

In Figure \ref{fig: real images}  we apply the isotropic and the anisotropic model to a several shadowed images affected by natural constant shadows\footnote{
Figure \ref{fig: ball} from \url{http://www.cs.huji.ac.il/~danix/ShadowRemoval/index.html};  
Figure \ref{fig: signal} from \url{http://aqua.cs.uiuc.edu/site/projects/shadow.html};
Figure \ref{fig: people} from \url{http://www.cs.haifa.ac.il/hagit/papers/ShadowRemoval/};
}. Zoomed details can be found in Figure \ref{fig: real images zoom}.
Here the complexity of local image structures makes the application of the anisotropic osmosis model harder. Thus, we use the tensor voting algorithm \ref{alg: tensor voting} to estimate the local directions connecting the structures from inside to the outside of the shadow region. Also in these real-world scenarios, we observe the superiority of the anisotropic approach: structures are interpolated reliably across the light jump introduced by the shadow.

\begin{figure}[tbhp]
\centering
\begin{subfigure}[t]{0.225\textwidth}\centering
\includegraphics[width=1\textwidth]{\detokenize{./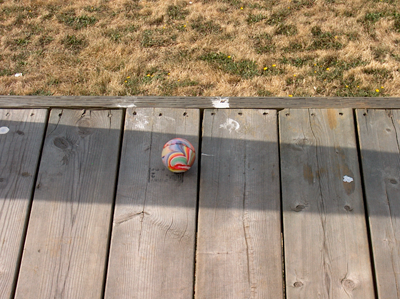}}
\caption{Shadowed $\fbold$}
\label{fig: ball}
\end{subfigure}
\hfill
\begin{subfigure}[t]{0.225\textwidth}\centering
\includegraphics[width=1\textwidth]{\detokenize{./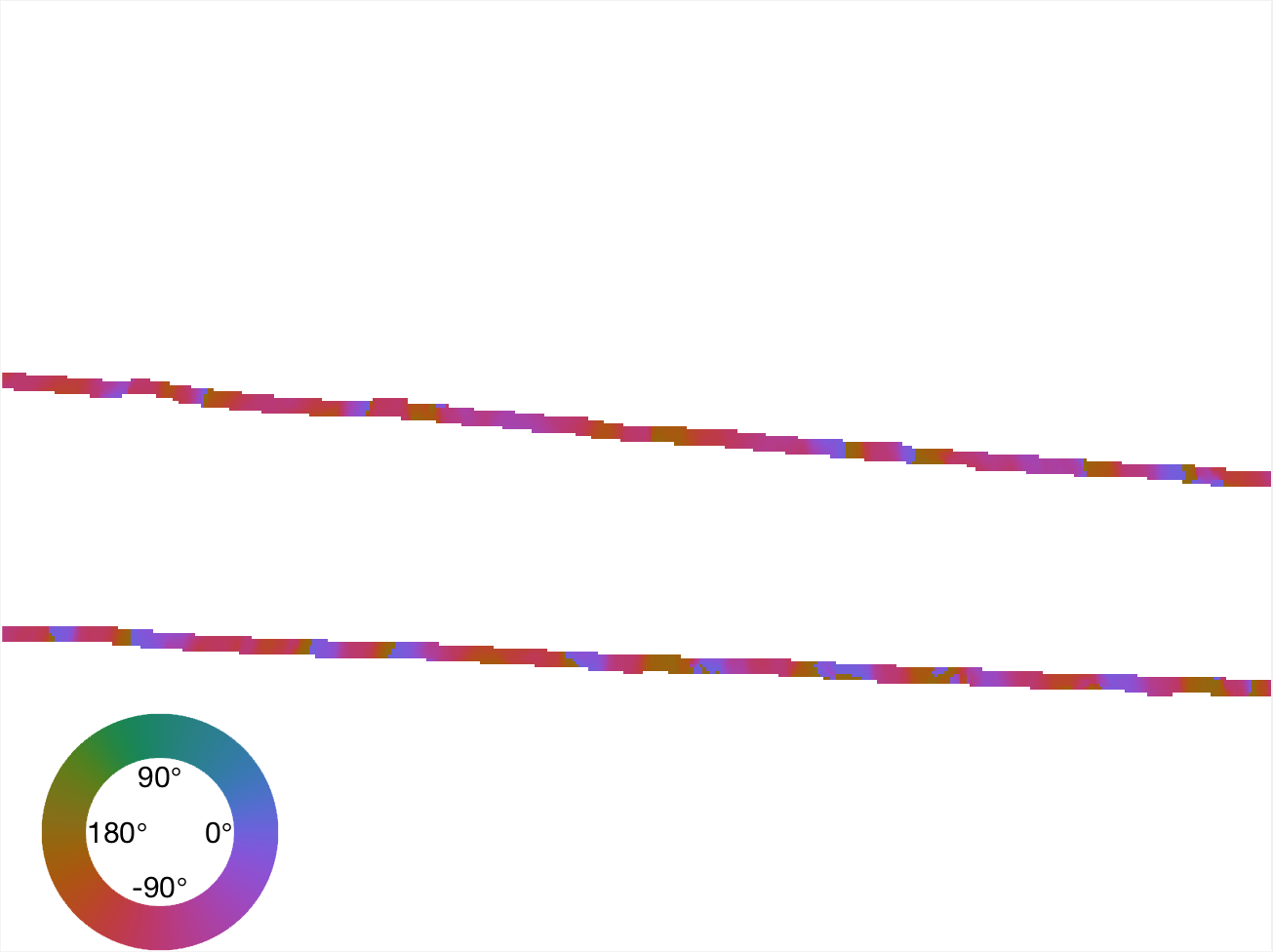}}
\caption*{Orientation angle $\thetabold$}
\end{subfigure}
\hfill
\begin{subfigure}[t]{0.225\textwidth}\centering
\includegraphics[width=1\textwidth]{\detokenize{./images/realshadow/6_classic.png}}
\caption*{Isotropic}
\end{subfigure}
\hfill
\begin{subfigure}[t]{0.225\textwidth}\centering
\includegraphics[width=1\textwidth]{\detokenize{./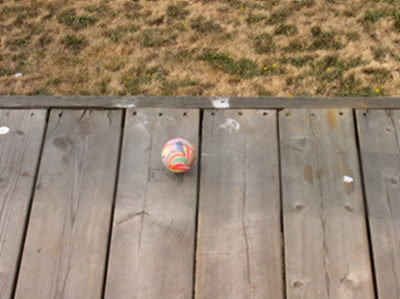}}
\caption*{Anisotropic}
\end{subfigure}
\\
\begin{subfigure}[t]{0.225\textwidth}\centering
\includegraphics[width=1\textwidth]{\detokenize{./images/realshadow/15.png}}
\caption{Shadowed $\fbold$}
\label{fig: signal}
\end{subfigure}
\hfill
\begin{subfigure}[t]{0.225\textwidth}\centering
\includegraphics[width=1\textwidth]{\detokenize{./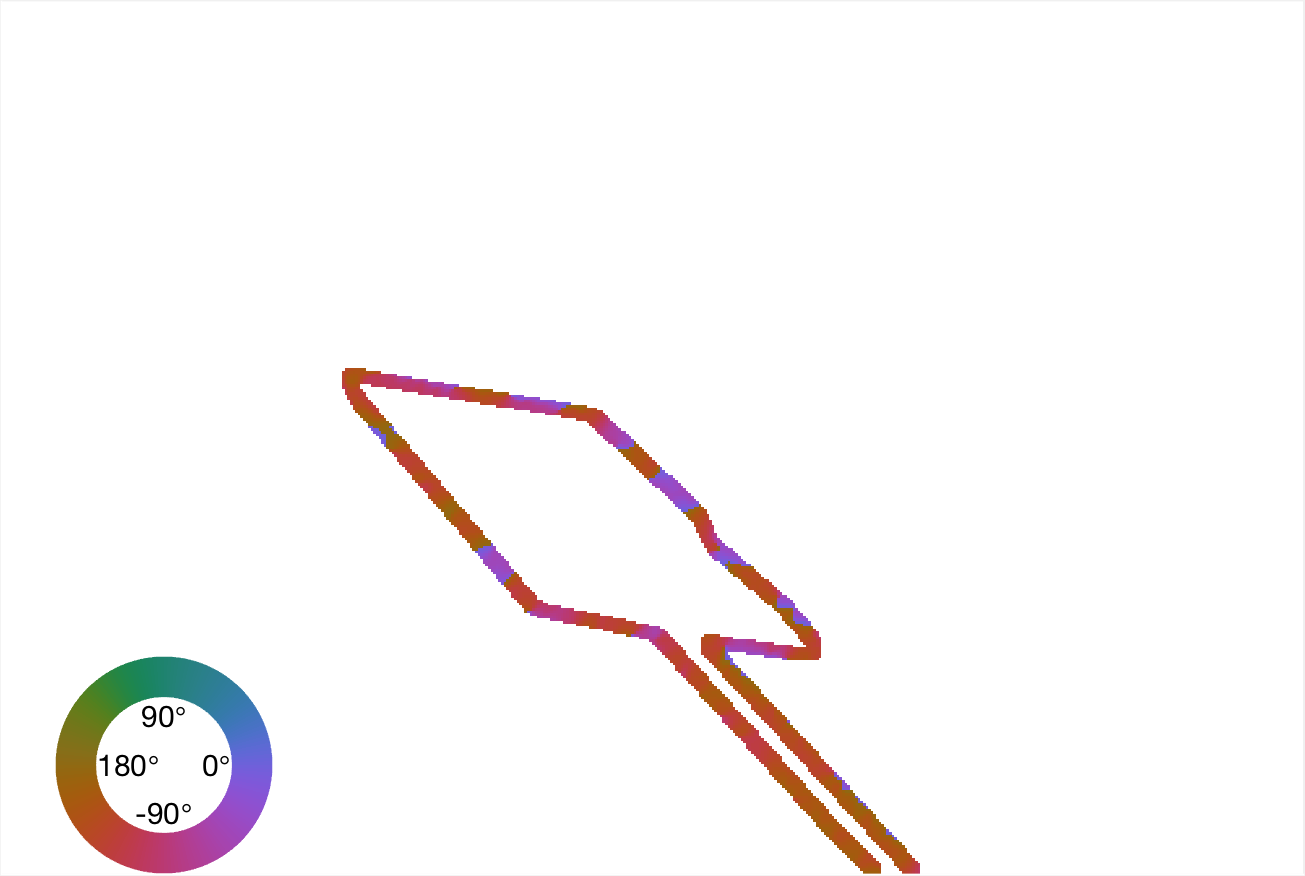}}
\caption*{Orientation angle $\thetabold$}
\end{subfigure}
\hfill
\begin{subfigure}[t]{0.225\textwidth}\centering
\includegraphics[width=1\textwidth]{\detokenize{./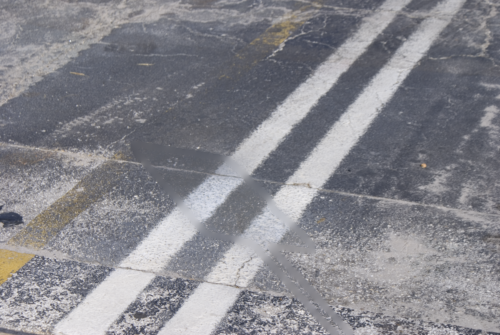}}
\caption*{Isotropic}
\end{subfigure}
\hfill
\begin{subfigure}[t]{0.225\textwidth}\centering
\includegraphics[width=1\textwidth]{\detokenize{./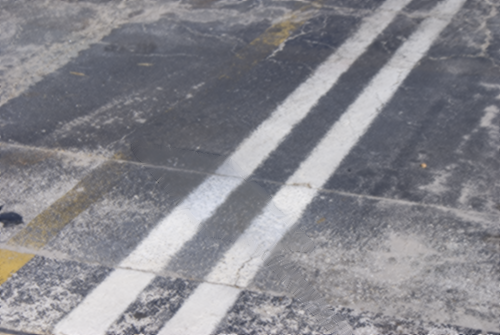}}
\caption*{Anisotropic}
\end{subfigure}
\\
\begin{subfigure}[t]{0.225\textwidth}\centering
\includegraphics[width=1\textwidth]{\detokenize{./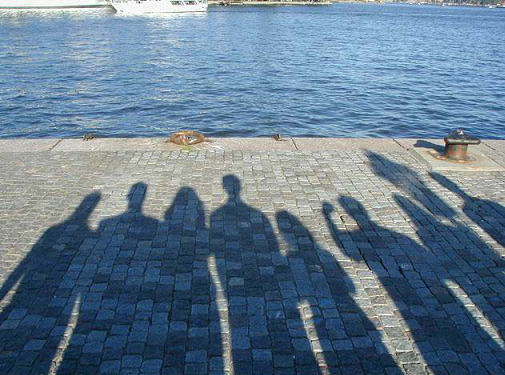}}
\caption{Shadowed $\fbold$}
\label{fig: people}
\end{subfigure}
\hfill
\begin{subfigure}[t]{0.225\textwidth}\centering
\includegraphics[width=1\textwidth]{\detokenize{./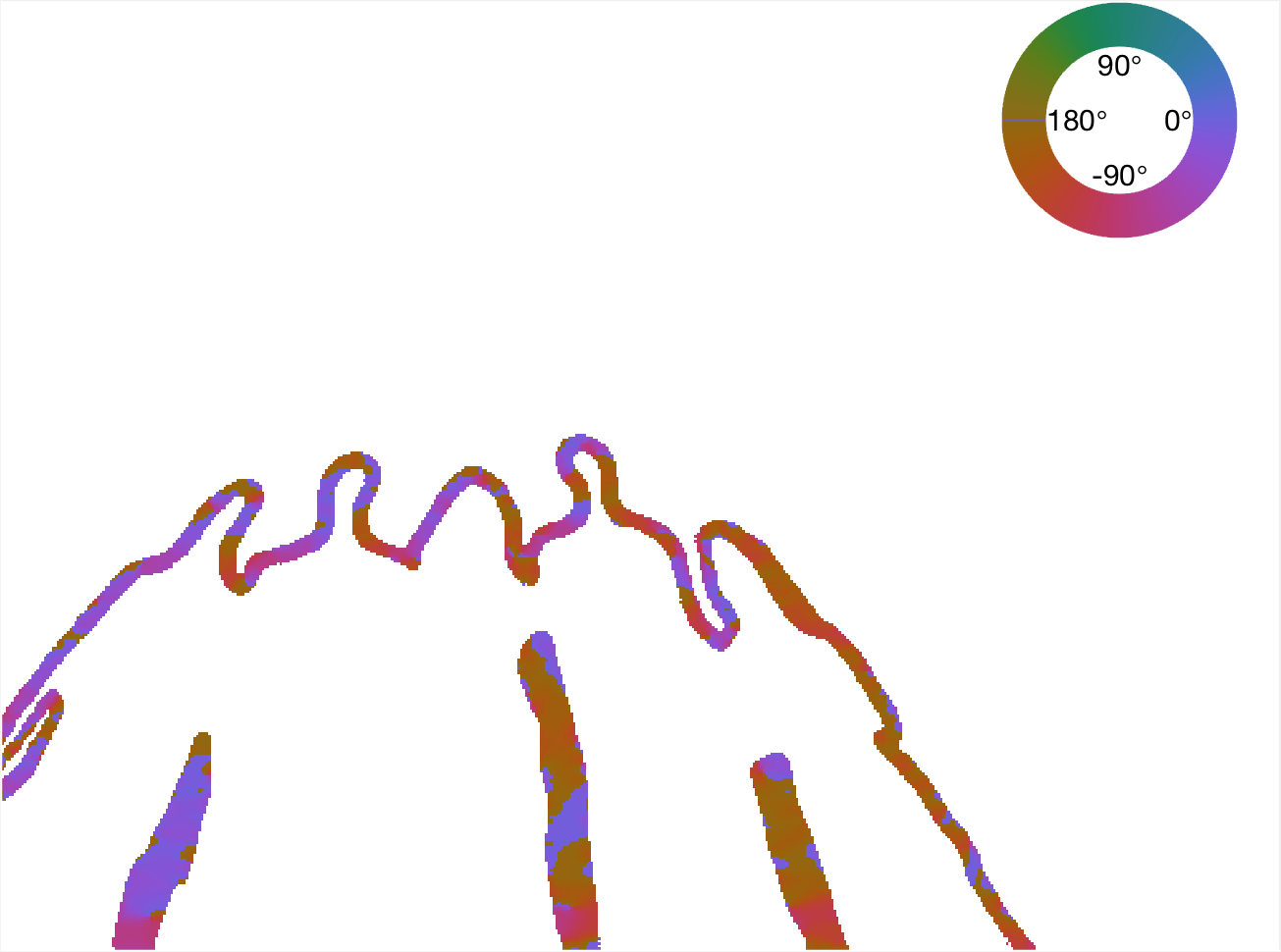}}
\caption*{Orientation angle $\thetabold$}
\end{subfigure}
\hfill
\begin{subfigure}[t]{0.225\textwidth}\centering
\includegraphics[width=1\textwidth]{\detokenize{./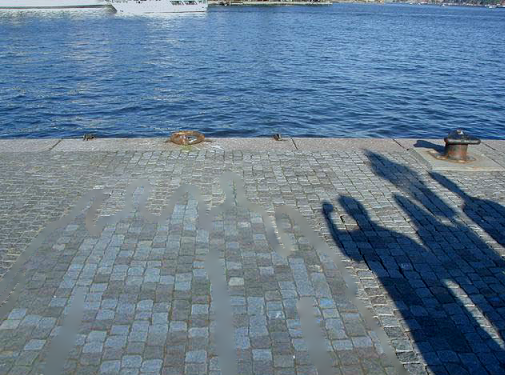}}
\caption*{Isotropic}
\end{subfigure}
\hfill
\begin{subfigure}[t]{0.225\textwidth}\centering
\includegraphics[width=1\textwidth]{\detokenize{./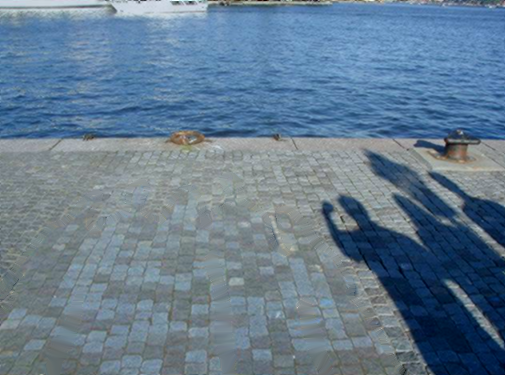}}
\caption*{Anisotropic}
\end{subfigure}
\caption{
Shadow removal via osmosis on real images. Comparison between isotropic and anisotropic osmosis.
Parameters: $\tau=1000$, $T=100000$, $\varepsilon=0.05$, and $\sigma=0.5$.
}
\label{fig: real images}
\end{figure}

\begin{figure}[tbhp]
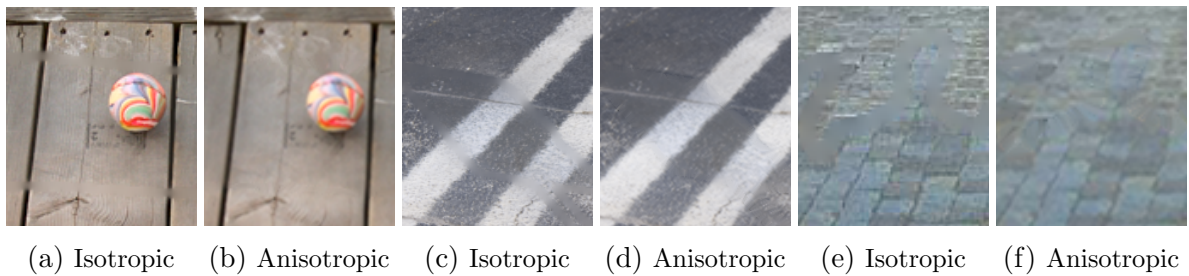

\begin{subfigure}[t]{0.161\textwidth}\centering
\includegraphics[width=1\textwidth, trim=3.55cm 2.5cm 6.6cm 3.5cm, clip=true]{\detokenize{./images/realshadow/6_classic.png}}
\caption{\footnotesize Isotropic}
\end{subfigure}
\begin{subfigure}[t]{0.161\textwidth}\centering
\includegraphics[width=1\textwidth, trim=3.55cm 2.5cm 6.6cm 3.5cm, clip=true]{\detokenize{./images/realshadow/6_mirebeau.png}}
\caption{\footnotesize Anisotropic}
\end{subfigure}
\begin{subfigure}[t]{0.161\textwidth}\centering
\includegraphics[width=1\textwidth, trim=5.05cm 2cm 7cm 3.4cm, clip=true]{\detokenize{./images/realshadow/15_classic.png}}
\caption{\footnotesize Isotropic}
\end{subfigure}
\begin{subfigure}[t]{0.161\textwidth}\centering
\includegraphics[width=1\textwidth, trim=5.05cm 2cm 7cm 3.4cm, clip=true]{\detokenize{./images/realshadow/15_mirebeau.png}}
\caption{\footnotesize Anisotropic}
\end{subfigure}
\begin{subfigure}[t]{0.161\textwidth}\centering
\includegraphics[width=1\textwidth, trim=3.01cm 4cm 12cm 6cm, clip=true]{\detokenize{./images/realshadow/14_classic.png}}
\caption{\footnotesize Isotropic}
\end{subfigure}
\begin{subfigure}[t]{0.161\textwidth}\centering
\includegraphics[width=1\textwidth, trim=3.01cm 4cm 12cm 6cm, clip=true]{\detokenize{./images/realshadow/14_mirebeau.png}}
\caption{\footnotesize Anisotropic}
\label{fig: zoom c}
\end{subfigure}
\caption{
Zoom into the results from Figure \ref{fig: real images}.
}
\label{fig: real images zoom}
\end{figure}

\subsection{Comments on the results}
We note that our anisotropic approach removes shadows effectively both for synthetic and real-world images. Structures are propagated correctly over the shadow boundary.

Since the AD-LBR stencil requires a positive definite matrix $\Wbold$ as input, we need to choose 
$\varepsilon > 0$, which may lead to some over-smoothing in the orthogonal direction. Also, we noticed some over-smoothing effect in $\Omega_\text{out}$ and $\Omega_\text{in}$ for the AD-LBR scheme, e.g.~ in Figure \ref{fig: zoom c}, whose understanding is a matter of future research.

In terms of efficiency, the computational time needed for computing the action of the exponential matrix onto a vector, that is the product $\exp(\tau\Abold)\ubold^0$, largely depends on the choice of the time-step $\tau$, the final time $T$ and the size of the images. Although it is possible to directly choose $\tau=T$ and proceed in a single
step, we prefer multiple steps. In this way, by comparing two successive solutions $\ubold^{k+1}$ and $\ubold^k$, it is possible to detect whether the evolution is sufficiently close to its steady state.
Our solvers that are exact in time offer additional advantages over classical inexact methods such as explicit and implicit schemes when one is also interested in good approximations of intermediate results. 



\section{Conclusions and outlook}

In this work, we have generalised isotropic osmosis filtering introduced in \cite{vogel,weickert} to the anisotropic setting. This was achieved by introducing a weight matrix whose directional information was extracted from a
modified tensor voting approach \cite{MorBurWeiGarPui12}. 
When applied to the shadow removal problem, the anisotropic model acts as an inpainting interpolator on the shadow boundary. It is close in spirit to inpainting with edge-enhancing anisotropic diffusion \cite{WW06,GWWB08,SPME14}.

From a numerical point of view,
we have combined the nonnegativity preserving anisotropic diffusion stencil of Fehrenbach and Mirebeau \cite{Mirebeau} with techniques based on exponential integration \cite{Caliari16}: We argued that the fully discrete model satisfies the discrete properties studied in \cite{vogel} in a general setting for osmosis. We tested the proposed model for synthetic and real examples, showing that the generalised model acts as a combined osmosis-inpainting model for shadow removal problems, thus avoiding any undesirable post-processing inpainting step.

Future work will address the investigation on the applicability of the proposed anisotropic model to more general imaging applications.

\ack{
SP acknowledges UK EPSRC grant EP/L016516/1 for the University of Cambridge, Cambridge Centre for Analysis DTC.
LC acknowledges the Fondation Mathématique Jacques Hadamard (FMJH), the JCJC INS2I grant LiftMe funded by CNRS and the RISE EU project NoMADS.
CBS acknowledges support from Leverhulme Trust project on Breaking the non-convexity barrier, EPSRC grant Nr.\ EP/M00483X/1, the EPSRC Centre Nr.\ EP/N014588/1, the RISE projects CHiPS and NoMADS, the Cantab Capital Institute for the Mathematics of Information and the Alan Turing Institute. 
JW acknowledges partially funding through the ERC Advanced Grant INCOVID.
The authors thank J.M.\ Mirebeau for the insightful discussions and comments on the adaptation of the code from \cite{Mirebeau} to our problem and the Isaac Newton Institute for Mathematical Sciences for support and hospitality during the program “Variational Methods and Effective Algorithms for Imaging and Vision” when work on this paper was undertaken. This work was supported by EPSRC grant number EP/K032208/1.
}

\section*{References}
\bibliographystyle{spmpsci}
\bibliography{biblio}

\end{document}